\newtheorem{prop}{Proposition}[section]
\newtheorem{teo}[prop]{Theorem}
\newtheorem{lm}[prop]{Lemma}
\theoremstyle{definition}
\newtheorem{defi}[prop]{Definition}
\newtheorem{oss}[prop]{Remark}
\newtheorem*{ack}{Acknowledgments}
\date{\today}
\keywords{Hardy inequality, nonlocal operators, fractional Sobolev spaces.}
\subjclass[2010]{39B72, 35R11, 46E35}
\numberwithin{equation}{section}
\title[The supersolution method for Hardy's inequality]{A note on the\\ supersolution method for Hardy's inequality}
\author[Bianchi]{Francesca Bianchi}
\address[F.\ Bianchi]{Dipartimento di Scienze Matematiche, Fisiche e Informatiche
	\newline\indent
	Universit\`a di Parma
	\newline\indent
	Parco Area delle Scienze 53/a, Campus, 43124 Parma, Italy}
\email{francesca.bianchi@unipr.it}
\author[Brasco]{Lorenzo Brasco}
\address[L.\ Brasco]{Dipartimento di Matematica e Informatica
	\newline\indent
	Universit\`a degli Studi di Ferrara
	\newline\indent
	Via Machiavelli 35, 44121 Ferrara, Italy}
\email{lorenzo.brasco@unife.it}
\author[Sk]{Firoj Sk}
\address[F. Sk]{Department of Mathematics
\newline\indent
Indian Institute of Science Education and Research
\newline\indent 
Dr. Homi Bhaba road, 411008 Pune, India}
\email{firojmaciitk7@gmail.com}
\author[Zagati]{Anna Chiara Zagati}
\address[A.\ C.\ Zagati]{Dipartimento di Scienze Matematiche, Fisiche e Informatiche
	\newline\indent
	Universit\`a di Parma
	\newline\indent
	Parco Area delle Scienze 53/a, Campus, 43124 Parma, Italy}
\email{annachiara.zagati@unipr.it}
\begin{document}

\begin{abstract}
We prove a characterization of Hardy's inequality in Sobolev-Slobodecki\u{\i} spaces in terms of positive local weak supersolutions of the relevant Euler-Lagrange equation. This extends previous results by Ancona and Kinnunen \& Korte for standard Sobolev spaces. The proof is based on variational methods.
\end{abstract}

\maketitle

\begin{center}
\begin{minipage}{10cm}
\small
\tableofcontents
\end{minipage}
\end{center}

\section{Introduction}
 
 \subsection{Main result}
The present note deals with the fractional Hardy inequality in Sobolev-Slobodecki\u{\i} spaces. For $1<p<\infty$, $0<s<1$ and $\Omega\subsetneq\mathbb{R}^N$ an open set, this takes the form
\begin{equation}
\label{hardy}
C\,\int_\Omega \frac{|u|^p}{d_\Omega^{s\,p}}\,dx\le \iint_{\mathbb{R}^N\times\mathbb{R}^N} \frac{|u(x)-u(y)|^p}{|x-y|^{N+s\,p}}\,dx\,dy,\qquad \mbox{ for every } u\in C^\infty_0(\Omega),
\end{equation}
where 
\[
d_\Omega(x)=\min_{y\in\partial\Omega} |x-y|,\qquad \mbox{ for }x\in\Omega.
\]
This note can be seen as a companion paper of \cite{BBZ}, where the main result here presented is applied. We refer the reader to \cite{BBZ} for more details on \eqref{hardy}.
The paper \cite{BBZ} deals with the problem of determining the sharp constant in \eqref{hardy}.
This is the quantity defined by 
\[
\mathfrak{h}_{s,p}(\Omega):=\inf_{u\in C^\infty_0(\Omega)}\left\{\iint_{\mathbb{R}^N\times\mathbb{R}^N} \frac{|u(x)-u(y)|^p}{|x-y|^{N+s\,p}}\,dx\,dy\, :\, \int_\Omega \frac{|u|^p}{d_\Omega^{s\,p}}\,dx=1\right\}.
\]
In order to present the main result, we need to introduce the equation
\begin{equation}
\label{equazione}
(-\Delta_p)^s u=\lambda\,\frac{|u|^{p-2}\,u}{d_\Omega^{s\,p}},\qquad \mbox{ in }\Omega,
\end{equation}
where $\lambda\ge 0$. The symbol $(-\Delta_p)^s$ stands for the {\it fractional $p-$Laplacian of order $s$}, defined in weak form by the first variation of the convex functional
\[
u\mapsto \frac{1}{p}\,[u]_{W^{s,p}(\mathbb{R}^N)}^p:=\frac{1}{p}\,\iint_{\mathbb{R}^N\times\mathbb{R}^N} \frac{|u(x)-u(y)|^p}{|x-y|^{N+s\,p}}\,dx\,dy,
\]
see Section \ref{sec:2} for more details.
\par
Actually, there is tight connection between {\it positive supersolutions} of \eqref{equazione} and the constant $\mathfrak{h}_{s,p}(\Omega)$. This connection is encoded in the following result, which is the main goal of the present note. We refer to the comments below, for a comparison with known results.
\begin{teo}
	\label{teo:duale}
	Let $1<p<\infty$, $0<s<1$ and let $\Omega\subsetneq\mathbb{R}^N$ be an open set. Then we have 
	\[
	\mathfrak{h}_{s,p}(\Omega)=\sup\Big\{\lambda\ge 0\, :\, \mbox{equation \eqref{equazione} admits a positive local weak supersolution}\Big\}.
	\]
\end{teo}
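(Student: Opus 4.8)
The plan is to establish the two inequalities separately; write $\Lambda$ for the supremum on the right-hand side of the claimed identity.

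\emph{The inequality $\Lambda\le\mathfrak h_{s,p}(\Omega)$.} Here I would use the fractional Picone inequality (see Section~\ref{sec:2}). Let $u$ be a positive local weak supersolution of \eqref{equazione} for some $\lambda\ge 0$ and fix $v\in C^\infty_0(\Omega)$. For $\varepsilon>0$, I would test the weak supersolution inequality with $\phi_\varepsilon:=|v|^p\,(u+\varepsilon)^{-(p-1)}$: this is nonnegative, compactly supported in $\Omega$ and of class $W^{s,p}$, hence admissible (all the nonlocal pairings being finite because a local weak supersolution has, by definition, a finite nonlocal tail). Since $(-\Delta_p)^s$ depends only on the increments of its argument one has $(-\Delta_p)^s(u+\varepsilon)=(-\Delta_p)^s u$, so the Picone inequality applied to the pair $(u+\varepsilon,\,|v|)$ gives
\[
\lambda\int_\Omega \frac{u^{p-1}}{(u+\varepsilon)^{p-1}}\,\frac{|v|^p}{d_\Omega^{sp}}\,dx\ \le\ \Big\langle (-\Delta_p)^s(u+\varepsilon)\,,\,\frac{|v|^p}{(u+\varepsilon)^{p-1}}\Big\rangle\ \le\ [v]_{W^{s,p}(\mathbb R^N)}^p .
\]
Letting $\varepsilon\downarrow 0$ and using monotone convergence (the left integrand increases to $|v|^p\,d_\Omega^{-sp}$ because $u>0$ a.e.\ in $\Omega$) yields $\lambda\int_\Omega |v|^p\,d_\Omega^{-sp}\,dx\le[v]_{W^{s,p}(\mathbb R^N)}^p$ for every $v\in C^\infty_0(\Omega)$, i.e.\ $\lambda\le\mathfrak h_{s,p}(\Omega)$; taking the supremum over admissible $\lambda$ gives $\Lambda\le\mathfrak h_{s,p}(\Omega)$.

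\emph{The inequality $\mathfrak h_{s,p}(\Omega)\le\Lambda$.} If $\mathfrak h_{s,p}(\Omega)=0$ there is nothing to prove: $u\equiv 1$ is a positive local weak supersolution of \eqref{equazione} with $\lambda=0$, so $\Lambda\ge 0$. Assume $\mathfrak h_{s,p}(\Omega)>0$; then \eqref{hardy} makes $[\,\cdot\,]_{W^{s,p}(\mathbb R^N)}$ a norm on $C^\infty_0(\Omega)$, and I would let $\mathcal D^{s,p}_0(\Omega)$ be the corresponding completion, a reflexive Banach space embedding continuously into $L^p(\Omega;d_\Omega^{-sp}\,dx)$ and into $W^{s,p}_{\mathrm{loc}}(\Omega)$. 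Fix $\lambda\in[0,\mathfrak h_{s,p}(\Omega))$ and $f\in C^\infty_0(\Omega)$ with $f\gneq 0$. I would build a supersolution by the iteration $u_0:=0$ and $u_{k+1}\in\mathcal D^{s,p}_0(\Omega)$ the unique weak solution of
\[
(-\Delta_p)^s u_{k+1}=\lambda\,\frac{u_k^{p-1}}{d_\Omega^{sp}}+f\quad\text{in }\Omega ,
\]
obtained as the minimiser over $\mathcal D^{s,p}_0(\Omega)$ of a convex coercive functional (coercivity holding because $u_k^{p-1}d_\Omega^{-sp}$ and $f$ lie in the dual of $\mathcal D^{s,p}_0(\Omega)$, by Hölder and \eqref{hardy}). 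The comparison principle for $(-\Delta_p)^s$ gives $0=u_0\le u_1\le u_2\le\cdots$, while a standard energy estimate — this is the step where the strict inequality $\lambda<\mathfrak h_{s,p}(\Omega)$ is essential — yields a uniform bound $\sup_k[u_k]_{W^{s,p}(\mathbb R^N)}<+\infty$. Hence $u_k\uparrow u$ for some $u\in\mathcal D^{s,p}_0(\Omega)$; by the minimum principle $u\ge u_1>0$ a.e.\ in $\Omega$ (note $u_1\not\equiv 0$ since $f\gneq 0$); and passing to the limit in the equations (a monotonicity argument upgrading weak to strong convergence in $\mathcal D^{s,p}_0(\Omega)$) gives $(-\Delta_p)^s u=\lambda\,u^{p-1}d_\Omega^{-sp}+f\ge\lambda\,u^{p-1}d_\Omega^{-sp}$ in $\Omega$. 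Thus $u$ is a positive local weak supersolution of \eqref{equazione}, so $\lambda\le\Lambda$; letting $\lambda\uparrow\mathfrak h_{s,p}(\Omega)$ concludes.

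\emph{Main obstacles.} In the first inequality, the delicate point is to justify $\phi_\varepsilon$ as a genuine test function within the precise notion of local weak supersolution fixed in Section~\ref{sec:2}: one must verify the $W^{s,p}$-regularity and compact support of $|v|^p(u+\varepsilon)^{-(p-1)}$, the finiteness of every nonlocal pairing (this is exactly where the finite-tail requirement is used), and have the fractional Picone inequality available in the form valid for $W^{s,p}_{\mathrm{loc}}$ functions with finite tail. In the second inequality, the main work lies in the a priori energy bound, in the limit passage through the nonlinear equation, in the strong minimum principle needed for strict positivity, and in checking that the global finite-energy solution $u$ does belong to the class of local weak supersolutions. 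As an alternative for the second inequality, one may minimise $v\mapsto\frac1p[v]_{W^{s,p}(\mathbb R^N)}^p-\frac\lambda p\int_\Omega|v|^p\,d_\Omega^{-sp}\,dx-\int_\Omega f\,v\,dx$ directly over $\mathcal D^{s,p}_0(\Omega)$, handling the lack of compactness via the Br\'ezis--Lieb lemma applied to both $[\,\cdot\,]_{W^{s,p}(\mathbb R^N)}$ and $\|\,\cdot\,\|_{L^p(\Omega;d_\Omega^{-sp})}$, with coercivity again coming from $\lambda<\mathfrak h_{s,p}(\Omega)$.
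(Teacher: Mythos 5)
Your upper bound $\Lambda\le\mathfrak h_{s,p}(\Omega)$ is exactly the paper's Lemma \ref{lm:2}: test the supersolution inequality with $|v|^p(u+\varepsilon)^{1-p}$, apply the discrete Picone inequality together with $[\,|v|\,]_{W^{s,p}(\mathbb{R}^N)}\le[v]_{W^{s,p}(\mathbb{R}^N)}$, and let $\varepsilon\to0$ by monotone convergence. For the lower bound the two arguments genuinely diverge. The paper fixes a ball $B\Subset\Omega$ and minimizes the single \emph{non-convex} functional $\varphi\mapsto\frac1p[\varphi]^p_{W^{s,p}(\mathbb{R}^N)}-\frac{\lambda}{p}\int_\Omega|\varphi|^pd_\Omega^{-sp}\,dx-\int_B\varphi\,dx$ over $\mathcal X^{s,p}_0(\Omega;d_\Omega)$ --- precisely the ``alternative'' you mention in your last lines --- and the whole of Section \ref{sec:3}, in particular the compactness Theorem \ref{lm:X} (which needs Mazur's lemma and Lemma \ref{lm:sfiga} to rule out a spurious additive constant) plus the Br\'ezis--Lieb splitting, exists to make that minimization work on an arbitrary $\Omega$ with $\mathfrak h_{s,p}(\Omega)>0$. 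Your main route is instead the classical Ancona/Kinnunen--Korte iteration $(-\Delta_p)^su_{k+1}=\lambda\,u_k^{p-1}d_\Omega^{-sp}+f$, which the authors explicitly set out to replace. It does work, and it buys something: each step is a \emph{convex} coercive problem, so existence requires only reflexivity and weak lower semicontinuity rather than a compactness theorem; the monotone limit gives a.e.\ convergence for free; and the passage to the limit in the equation follows from the $L^{p'}$ bound on $J_p(D^su_k)$ together with a.e.\ convergence, no Minty argument really being necessary. The price is the comparison principle: you must justify $(u_k-u_{k+1})_+$ as an admissible global test function, i.e.\ establish that your space is stable under taking positive parts, and this in turn presupposes the careful realization of the abstract completion $\mathcal D^{s,p}_0(\Omega)$ as a genuine function space. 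On that last point you should be explicit: a completion under a seminorm is not automatically a space of functions; here it is, because Hardy's inequality makes Cauchy sequences converge in $L^p(\Omega;d_\Omega^{-sp})$, and the resulting space coincides with the paper's $\mathcal X^{s,p}_0(\Omega;d_\Omega)$, whose inclusion into $W^{s,p}_{\rm loc}(\Omega)\cap L^{p-1}_{s\,p}(\mathbb{R}^N)$ (Proposition \ref{lm:banach}) is what certifies that your limit $u$ is a local weak supersolution in the sense of Definition \ref{defi:subsuper}. With these points spelled out, your argument is a valid alternative proof of the lower bound.
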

The proof of this result is a direct consequence of the following equivalence (see Lemmas \ref{lm:2} and \ref{lm:1} below)
\[
\mathfrak{h}_{s,p}(\Omega)>0\qquad \Longleftrightarrow \qquad \begin{array}{c}\mbox{ the equation \eqref{equazione} admits a positive}\\
	\mbox{ local weak supersolution for some $\lambda>0$}.
\end{array}
\]
Such an equivalence is quite well-known among experts, at least in the local case, i.e. for standard Sobolev spaces. The case $p=2$ is contained for example in the classical paper \cite[Appendix]{An} by Alano Ancona and it has been generalized to $p\not=2$ by Juha Kinnunen and Riika Korte, see \cite[Theorem 5.1]{KK}. 
We should notice that the case $p=2$ and $0<s<1$ can be obtained from \cite[Theorem 1.9]{Fitz} by Patrick Joseph Fitzsimmons, which is concerned with the more general framework of Dirichlet forms associated to symmetric Markov processes. We point out that in \cite{Fitz} the author uses a probabilistic approach, which can not be applied to the case $p\not=2$.
\par
In all these references, the proof of the implication ``$\Longrightarrow$'' is based on the Lax-Milgram Theorem for coercive bilinear forms and its non-Hilbertian variants\footnote{As an historical curiosity, we notice that the prototype of this kind of result can be traced back to a paper by Giuseppe Tomaselli, dealing with weighted Hardy inequalities for standard Sobolev spaces, in the one-dimensional case. We refer to \cite[Lemma 2, point (ii)]{To} for more details.}. Our proof of this fact is slightly different: more precisely, we use a purely variational approach (see Lemma \ref{lm:1} below) in order to show existence of a supersolution. This is quite delicate, since in Theorem \ref{teo:duale} we do not have any assumption on the open set $\Omega$, apart from the fact that it admits a Hardy inequality. Thus, when employing the Direct Method in the Calculus of Variations, some non-trivial compactness issues arise. A careful study of a suitable weighted Sobolev-Slobodecki\u{\i} space is needed at that point (see Section \ref{sec:3}). This also gives us the opportunity to make some precisions on the correct functional analytic setting which is needed for this result (see Remark \ref{oss:regolamento}).

\subsection{Plan of the paper} In Section \ref{sec:2} we introduce the main notation and definitions. Section \ref{sec:3} is devoted to discuss in detail a weighted Sobolev-Slobodecki\u{\i} space. This in an essential tool in the proof of Theorem \ref{teo:duale}, which can be found in Section \ref{sec:4}.

\begin{ack}
We wish to thank Pierre Bousquet, Juha Kinnunen and Michele Miranda for some useful discussions on the topics of this paper. 
\par
Part of this paper has been written during a staying of L.\,B. at the Institute of Applied Mathematics and Mechanics of the University of Warsaw, in the context of the {\it Thematic Research Programme: ``Anisotropic and Inhomogeneous Phenomena''}, in July 2022. Iwona Chlebicka and Anna Zatorska-Goldstein are gratefully acknowledged for their kind invitation and the nice working atmosphere provided during the whole staying.
\par
F.\,B. and A.\,C.\,Z. are members of the {\it Gruppo Nazionale per l'Analisi Matematica, la Probabilit\`a e le loro Applicazioni} (GNAMPA) of the Istituto Nazionale di Alta Matematica (INdAM). F.\,B., L.\,B. and A.\,C.\,Z.  gratefully acknowledge the financial support of the projects FAR 2019 and FAR 2020 of the University of Ferrara. F.\,S. is supported by the SERB WEA grant no.
WEA/2020/000005.
\end{ack}

\section{Notation and definitions}
\label{sec:2}
For every $1<p<\infty$, we indicate by $J_p:\mathbb{R}\to\mathbb{R}$ the monotone increasing continuous function defined by
\[
J_p(t)=|t|^{p-2}\,t,\qquad \mbox{ for every } t\in\mathbb{R}.
\]
For $x_0\in \mathbb{R}^N$ and $R>0$, we will set
\[
B_R(x_0)=\Big\{x\in\mathbb{R}^N\, :\, |x-x_0|<R\Big\},
\]
and $\omega_N:=|B_1(x_0)|$. For an open set $\Omega\subsetneq \mathbb{R}^N$, we denote by
\[
d_\Omega(x)=\min_{y\in\partial\Omega} |x-y|,\qquad \mbox{ for every }x\in\Omega,
\]
the distance function from the boundary. Whenever such a distance is bounded, we will set 
\begin{equation}
\label{inradius}
r_\Omega:=\|d_\Omega\|_{L^\infty(\Omega)}. 
\end{equation}
This will be called {\it inradius} of the set $\Omega$.
For two open sets $\Omega'\subset\Omega \subset\mathbb{R}^N$, we will write $\Omega'\Subset\Omega$ to indicate that the closure $\overline{\Omega'}$ is a compact set contained in $\Omega$. 
\vskip.2cm\noindent
For $1<p<\infty$ and $0<s<1$, we consider the fractional Sobolev space 
\[
W^{s,p}(\mathbb{R}^N)=\Big\{u\in L^p(\mathbb{R}^N)\, :\, [u]_{W^{s,p}(\mathbb{R}^N)}<+\infty\Big\},
\]
where 
\[
[u]_{W^{s,p}(\mathbb{R}^N)}:=\left(\iint_{\mathbb{R}^N\times\mathbb{R}^N} \frac{|u(x)-u(y)|^p}{|x-y|^{N+s\,p}}\,dx\,dy\right)^\frac{1}{p}.
\]
This is a reflexive Banach space, when endowed with the natural norm
\[
\|u\|_{W^{s,p}(\mathbb{R}^N)}=\|u\|_{L^p(\mathbb{R}^N)}+[u]_{W^{s,p}(\mathbb{R}^N)},
\]
see for example \cite[Proposition 17.6 \& Theorem 17.41]{Leo}.
For an open set $\Omega\subset\mathbb{R}^N$, we indicate by $\widetilde{W}^{s,p}_0(\Omega)$ the closure of $C^\infty_0(\Omega)$ in $W^{s,p}(\mathbb{R}^N)$. 
\par
Occasionally, for an open set $\Omega\subset\mathbb{R}^N$, we will need the fractional Sobolev space defined by
\[
W^{s,p}(\Omega)=\Big\{u\in L^p(\Omega)\, :\, [u]_{W^{s,p}(\Omega)}<+\infty\Big\},
\]
where 
\[
[u]_{W^{s,p}(\Omega)}:=\left(\iint_{\Omega \times \Omega} \frac{|u(x)-u(y)|^p}{|x-y|^{N+s\,p}}\,dx\,dy\right)^\frac{1}{p}.
\]
Finally, $W^{s,p}_{\rm loc}(\Omega)$ is the space of functions $u\in L^p_{\rm loc}(\Omega)$ such that $u\in W^{s,p}(\Omega')$ for every $\Omega'\Subset \Omega$.
\par
For $0<\beta<\infty$, we also denote by $L^\beta_{s\,p}(\mathbb{R}^N)$ the following weighted Lebesgue space
\[
L^\beta_{s\,p}(\mathbb{R}^N)=\left\{u\in L^\beta_{\rm loc}(\mathbb{R}^N)\, :\, \int_{\mathbb{R}^N} \frac{|u(x)|^\beta}{(1+|x|)^{N+s\,p}}\,dx<+\infty\right\}.
\]
We observe that this is a Banach space for $\beta\ge 1$, when endowed with the natural norm. Moreover, it is not difficult to see that 
\begin{equation}
\label{monoLpesi}
L^\beta_{s\,p}(\mathbb{R}^N)\subset L^{\beta'}_{s\,p}(\mathbb{R}^N),\qquad \mbox{ for } 0<\beta'<\beta<\infty.
\end{equation}
It is sufficient to use that 
\[
\int_{\mathbb{R}^N} \frac{1}{(1+|x|)^{N+s\,p}}\,dx<+\infty,\qquad \mbox{ for every }N\ge 1,\ 1<p<\infty\ \mbox{ and } 0<s<1,
\]
and then apply Jensen's inequality.
\begin{defi}
\label{defi:subsuper}
We say that $u\in W^{s,p}_{\rm loc}(\Omega)\cap L^{p-1}_{s\,p}(\mathbb{R}^N)$ is a 
\begin{itemize}
\item {\it local weak supersolution} of \eqref{equazione} if 
\begin{equation}
\label{weaksuper}
\iint_{\mathbb{R}^N\times \mathbb{R}^N} \frac{J_p(u(x)-u(y))\,(\varphi(x)-\varphi(y))}{|x-y|^{N+s\,p}}\,dx\,dy\ge \lambda\,\int_\Omega \frac{|u(x)|^{p-2}\,u(x)}{d_\Omega(x)^{s\,p}}\,\varphi(x)\,dx,
\end{equation}
for every non-negative $\varphi\in W^{s,p}(\mathbb{R}^N)$ with compact support in $\Omega$;
\vskip.2cm
\item {\it local weak solution} of \eqref{equazione} if \eqref{weaksuper} holds as an equality, for every $\varphi\in W^{s,p}(\mathbb{R}^N)$ with compact support in $\Omega$.
\end{itemize}
\end{defi}
\begin{oss}
Under the assumptions taken on $u$ and the test functions, the previous definition is well-posed, i.e.
\[
 \frac{J_p(u(x)-u(y))\,(\varphi(x)-\varphi(y))}{|x-y|^{N+s\,p}}\in L^1(\mathbb{R}^N\times\mathbb{R}^N).
\]
We also observe that if a local weak solution $u$ belongs to $\widetilde W^{s,p}_0(\Omega)$, then by a density argument we can take $\varphi=u$ itself as a test function in the weak formulation.
\end{oss}

\section{A weighted fractional Sobolev space}
\label{sec:3}
In the proof of Theorem \ref{teo:duale}, we will crucially exploit a suitable weighted fractional Sobolev space, whose definition is inspired by \cite[Appendix]{An}.
\begin{defi}\label{def:X}
Let $1<p<\infty$, $0<s<1$ and let $\Omega\subsetneq\mathbb{R}^N$ be an open set, we define
\[ 
\mathcal{X}^{s,p}(\Omega; d_{\Omega}) := \left\{ u \in L^{p}_{s\,p}(\mathbb{R}^N) : [u]_{W^{s,p}(\mathbb{R}^N)} < +\infty\ \mbox{ and }\ \frac{u}{d_{\Omega}^s} \in L^p(\Omega)\right\}, 
\]
endowed with the norm
\[ 
\| u \|_{\mathcal{X}^{s,p}(\Omega; d_{\Omega})}:= [u]_{W^{s,p}(\mathbb{R}^N)} + \left(\int_{\Omega} \frac{|u|^p}{d_{\Omega}^{s\,p}} \,dx \right)^\frac{1}{p},\qquad \mbox{ for every } u\in \mathcal{X}^{s,p}(\Omega; d_{\Omega}).
\]
Then we define $\mathcal{X}^{s,p}_0(\Omega; d_{\Omega})$ as the closure of 
$C^{\infty}_0(\Omega)$ in $\mathcal{X}^{s,p}(\Omega;d_\Omega)$. 
\end{defi}
\begin{oss}
\label{oss:estendi}
We observe that if the open set $\Omega\subsetneq\mathbb{R}^N$ is such that $\mathfrak{h}_{s,p}(\Omega)>0$, then by a simple density argument we can assure that Hardy's inequality holds in $\mathcal{X}^{s,p}_0(\Omega;d_\Omega)$, as well. That is, we have
\[
\mathfrak{h}_{s,p}(\Omega)\,\int_{\Omega} \frac{|u|^p}{d_{\Omega}^{s\,p}} \,dx \le [u]^p_{W^{s,p}(\mathbb{R}^N)},\qquad \mbox{ for every } u\in \mathcal{X}^{s,p}_0(\Omega;d_\Omega).
\]
Accordingly, this implies that in this case
\[
u\mapsto [u]_{W^{s,p}(\mathbb{R}^N)},
\]
defines an equivalent norm on $\mathcal{X}^{s,p}_0(\Omega;d_\Omega)$.
\end{oss}

\begin{prop}
\label{lm:banach}
Let $1<p<\infty$ and $0<s<1$. Let $\Omega\subsetneq\mathbb{R}^N$ be an open set. Then
\[
\mathcal{X}^{s,p}(\Omega;d_\Omega)\subset W^{s,p}_{\rm loc}(\Omega)\cap L^{p-1}_{s\,p}(\mathbb{R}^N),
\]
and we have the estimate
\begin{equation}
\label{boh!}
\left(\int_{\mathbb{R}^N} \frac{|u|^p}{(1+|x|)^{N+s\,p}}\,dx\right)^\frac{1}{p}\le C_\Omega\,\| u \|_{\mathcal{X}^{s,p}(\Omega;d_\Omega)},\qquad \mbox{ for every } u\in \mathcal{X}^{s,p}(\Omega;d_\Omega).
\end{equation}
Moreover, $\mathcal{X}^{s,p}(\Omega;d_\Omega)$ and $\mathcal{X}^{s,p}_0(\Omega;d_\Omega)$ are Banach spaces.
\end{prop}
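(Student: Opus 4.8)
The plan is to establish the three assertions — the inclusion $\mathcal{X}^{s,p}(\Omega;d_\Omega)\subset W^{s,p}_{\rm loc}(\Omega)\cap L^{p-1}_{s\,p}(\mathbb{R}^N)$, the weighted estimate \eqref{boh!}, and completeness — roughly in this order, since each step feeds the next. For the inclusion in $W^{s,p}_{\rm loc}(\Omega)$, I would fix $\Omega'\Subset\Omega$ and simply observe that $[u]_{W^{s,p}(\Omega')}\le [u]_{W^{s,p}(\mathbb{R}^N)}$, while $u\in L^p(\Omega')$ because on the compact set $\overline{\Omega'}$ the distance $d_\Omega$ is bounded below by a positive constant, so $\int_{\Omega'}|u|^p\,dx \le (\sup_{\Omega'} d_\Omega^{s\,p})\int_{\Omega'} |u|^p/d_\Omega^{s\,p}\,dx<+\infty$; this uses nothing but the two defining finiteness conditions. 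The inclusion in $L^{p-1}_{s\,p}(\mathbb{R}^N)$ will then follow once \eqref{boh!} is proven, via the monotonicity \eqref{monoLpesi} with $\beta'=p-1<p=\beta$.

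The heart of the matter is the estimate \eqref{boh!}, i.e. controlling the global weighted $L^p$-norm $\int_{\mathbb{R}^N}|u|^p/(1+|x|)^{N+s\,p}\,dx$ by $\|u\|_{\mathcal{X}^{s,p}(\Omega;d_\Omega)}$. The natural strategy is to split $\mathbb{R}^N$ into a fixed bounded ball $B=B_R(x_0)$ with $B\cap\Omega\neq\emptyset$ and chosen so that, say, $\overline{B}\cap\Omega$ contains a small ball on which $d_\Omega\ge c>0$, and its complement. On $\mathbb{R}^N\setminus B$ the weight $(1+|x|)^{-(N+s\,p)}$ is comparable to $|x-x_0|^{-(N+s\,p)}$, so one wants to bound $\int_{\mathbb{R}^N\setminus B}|u(x)|^p\,|x-x_0|^{-N-s\,p}\,dx$ by the Gagliardo seminorm plus a local $L^p$ term: this is a standard "pointwise-to-tail" estimate obtained by writing $|u(x)|^p\le 2^{p-1}|u(x)-u(y)|^p+2^{p-1}|u(y)|^p$, integrating the $y$ variable over a reference ball $B'\Subset B$ on which $d_\Omega$ is bounded below, and using that for $x$ far and $y$ in $B'$ one has $|x-y|\approx |x-x_0|$. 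This produces
\[
\int_{\mathbb{R}^N\setminus B}\frac{|u(x)|^p}{(1+|x|)^{N+s\,p}}\,dx \le C\Big([u]_{W^{s,p}(\mathbb{R}^N)}^p + \int_{B'}|u|^p\,dx\Big),
\]
and the local $L^p$ term is absorbed exactly as in the previous paragraph by $d_\Omega\ge c$ on $B'$. On the bounded part $B$, since $(1+|x|)^{-(N+s\,p)}$ is bounded there, one needs only $\int_B |u|^p\,dx<\infty$; I would get this by a local fractional Poincaré/Sobolev inequality on a slightly larger ball (controlling $\|u\|_{L^p(B)}$ by $[u]_{W^{s,p}}$ plus the same reference $L^p(B')$ term), again closing the loop via Hardy weight on $B'$. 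Summing the two contributions gives \eqref{boh!} with $C_\Omega$ depending only on $N,s,p$ and on the position of $\Omega$ through the choice of $B,B'$ and the lower bound of $d_\Omega$ there.

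For completeness, I would argue as follows: let $(u_n)$ be Cauchy in $\mathcal{X}^{s,p}(\Omega;d_\Omega)$. By \eqref{boh!} it is Cauchy in the weighted space $L^p(\mathbb{R}^N;(1+|x|)^{-N-s\,p}dx)$, hence converges there to some $u$, and a subsequence converges a.e.; also $(u_n)$ is Cauchy in $L^p(\Omega;d_\Omega^{-s\,p}dx)$, so (up to the a.e.\ limit) $u/d_\Omega^s\in L^p(\Omega)$ with $u_n/d_\Omega^s\to u/d_\Omega^s$ in $L^p(\Omega)$; and $(u_n)$ is Cauchy in the Gagliardo seminorm, so by Fatou applied to the difference $u_n-u_m$ (passing to the a.e.\ limit in $m$) one gets $[u_n-u]_{W^{s,p}(\mathbb{R}^N)}\to 0$ and in particular $[u]_{W^{s,p}(\mathbb{R}^N)}<\infty$. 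Thus $u\in\mathcal{X}^{s,p}(\Omega;d_\Omega)$ and $u_n\to u$ in norm, proving $\mathcal{X}^{s,p}(\Omega;d_\Omega)$ is Banach; $\mathcal{X}^{s,p}_0(\Omega;d_\Omega)$ is then Banach as a closed subspace by definition. The step I expect to require the most care is the tail estimate \eqref{boh!} — specifically making sure the reference ball $B'$ can always be chosen inside $\Omega$ (which is where $\Omega\subsetneq\mathbb{R}^N$ and openness are used) and tracking that the constant genuinely depends only on admissible data; the completeness argument is then essentially a routine Fatou-and-a.e.-subsequence diagonalization.
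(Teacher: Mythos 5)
Your proposal is correct and follows essentially the same route as the paper: a reference ball compactly contained in $\Omega$, on which the Hardy term controls the local $L^p$ norm, combined with the Gagliardo seminorm to reach the tail weight $(1+|x|)^{-N-s\,p}$ (the paper implements this via Minkowski's inequality rather than the pointwise convexity bound, and takes the near ball entirely inside $\Omega$ so that your local Poincar\'e step is not needed), followed by the standard three-component Cauchy/Fatou identification for completeness. One minor verbal slip to fix: when absorbing $\int_{B'}|u|^p\,dx$ into $\int_\Omega |u|^p/d_\Omega^{s\,p}\,dx$, what you actually use (and correctly display) is that $d_\Omega$ is bounded \emph{above} on the compact reference set, not bounded below.
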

\begin{proof}
The first fact is straightforward, by also taking into account \eqref{monoLpesi}.
\par
We prove the estimate \eqref{boh!}. We take a ball $B_{R}(x_0)\Subset \Omega$ such that $B_{2\,R}(x_0)\Subset\Omega$, as well. We then write
\[
\begin{split}
[u]_{W^{s,p}(\mathbb{R}^N)}&=\left(\iint_{\mathbb{R}^N\times \mathbb{R}^N} \frac{|u(x)-u(y)|^p}{|x-y|^{N+s\,p}}\,dx\,dy\right)^\frac{1}{p}\\
&\ge \left(\iint_{B_{R}(x_0)\times (\mathbb{R}^N\setminus B_{2\,R}(x_0))} \frac{|u(x)-u(y)|^p}{|x-y|^{N+s\,p}}\,dx\,dy\right)^\frac{1}{p}\\
&\ge \left(\int_{B_{R}(x_0)}\left(\int_{\mathbb{R}^N\setminus B_{2\,R}(x_0)} \frac{|u(y)|^p}{|x-y|^{N+s\,p}}\,dy\right)\,dx\right)^\frac{1}{p}\\
&-\left(\int_{B_{R}(x_0)}|u(x)|^p\,\left(\int_{\mathbb{R}^N\setminus B_{2\,R}(x_0)} \frac{1}{|x-y|^{N+s\,p}}\,dy\right)\,dx\right)^\frac{1}{p},
\end{split}
\]
thanks to Minkowski's inequality.
By observing that 
\[
|x-y|\ge \frac{1}{2}\,|y-x_0|,\qquad \mbox{ for every } x\in B_R(x_0), \ y\not\in B_{2\,R}(x_0),
\]
we have 
\[
\begin{split}
\int_{B_{R}(x_0)}|u(x)|^p\,\left(\int_{\mathbb{R}^N\setminus B_{2\,R}(x_0)} \frac{1}{|x-y|^{N+s\,p}}\,dy\right)\,dx&\le \frac{N\,\omega_N\,2^N}{s\,p}\,R^{-s\,p}\,\int_{B_R(x_0)} |u|^p\,dx\\
&\le \frac{N\,\omega_N\,2^N}{s\,p}\,R^{-s\,p}\,\int_{\Omega} \frac{|u|^p}{d_\Omega^{s\,p}}\,dx\,\|d_\Omega\|^{s\,p}_{L^\infty(B_R(x_0))}.
\end{split}
\]
This implies that we have 
\begin{equation}
\label{mezzanotte}
\left(\int_{B_{R}(x_0)}\left(\int_{\mathbb{R}^N\setminus B_{2\,R}(x_0)} \frac{|u(y)|^p}{|x-y|^{N+s\,p}}\,dy\right)\,dx\right)^\frac{1}{p}\le C\,\| u \|_{\mathcal{X}^{s,p}(\Omega; d_{\Omega})},
\end{equation}
for a constant $C=C(N,s,p,\Omega,B_R(x_0))>0$. We now use that 
\[
|x-y|\le 2\,|x_0-y|,\qquad \mbox{ for every } x\in B_R(x_0), \ y\not\in B_{2\,R}(x_0),
\]
together with the fact that 
\[
|x_0-y|\le |x_0|+|y|\le (1+|x_0|)\,(1+|y|).
\]
By using these in \eqref{mezzanotte}, we get
\[
\left(\int_{\mathbb{R}^N\setminus B_{2\,R}(x_0)} \frac{|u(y)|^p}{(1+|y|)^{N+s\,p}}\,dy\right)^\frac{1}{p}\le C\,\| u \|_{\mathcal{X}^{s,p}(\Omega; d_{\Omega})},
\]
possibly with a different constant $C=C(N,s,p,\Omega,B_R(x_0))>0$. The proof of estimate \eqref{boh!} is almost over: it is now sufficient to add on both sides the term
\[
\left(\int_{B_{2\,R}(x_0)} \frac{|u(y)|^p}{(1+|y|)^{N+s\,p}}\,dy\right)^\frac{1}{p}.
\]
Then by using that 
\[
\begin{split}
\int_{B_{2\,R}(x_0)} \frac{|u(y)|^p}{(1+|y|)^{N+s\,p}}\,dy\le \int_{B_{2\,R}(x_0)} |u(y)|^p\,dy&\le \int_{\Omega} \frac{|u|^p}{d_\Omega^{s\,p}}\,dy\,\|d_\Omega\|^{s\,p}_{L^\infty(B_{2\,R}(x_0))}\\
&\le C_\Omega\,\|u\|^p_{\mathcal{X}^{s,p}(\Omega;d_\Omega)},
\end{split}
\]
we eventually get the desired conclusion. 
\vskip.2cm\noindent
We prove the second part of the statement. We first observe that it is sufficient to prove that $\mathcal{X}^{s,p}(\Omega;d_\Omega)$ is a Banach space. We take $\{u_n\}_{n\in\mathbb{N}}\subset \mathcal{X}^{s,p}(\Omega;d_\Omega)$ to be a Cauchy sequence. Then we get that this is a Cauchy sequence in the Banach space $L^p(\Omega;d_\Omega^{-s\,p})$ and that 
\[
\{D^s u_n\}_{n\in\mathbb{N}}\qquad \mbox{ where } D^s \varphi(x,y):=\frac{\varphi(x)-\varphi(y)}{|x-y|^{\frac{N}{p}+s}},
\]
is a Cauchy sequence in $L^p(\mathbb{R}^N\times\mathbb{R}^N)$. This follows from the fact that
\[
[u_n]_{W^{s,p}(\mathbb{R}^N)}=\|D^s u_n\|_{L^p(\mathbb{R}^N\times\mathbb{R}^N)}.
\]
Moreover, according to \eqref{boh!}, the sequence $\{u_n\}_{n\in\mathbb{N}}$ is also a Cauchy sequence in the Banach space $L^p_{s\,p}(\mathbb{R}^N)$. The last fact implies that there exists $u\in L^p_{s\,p}(\mathbb{R}^N)$ such that 
\[
\lim_{n\to\infty}\int_{\mathbb{R}^N} \frac{|u_n-u|^p}{(1+|x|)^{N+s\,p}}\,dx=0.
\]
In particular, up to a subsequence, we can suppose that $u_n$ converges to $u$ almost everywhere in $\mathbb{R}^N$. By using the completeness of $L^p(\Omega;d_\Omega^{-s\,p})$, we get similarly the existence of $\widetilde{u}\in L^p(\Omega;d_\Omega^{-s\,p})$ such that 
\[
\lim_{n\to\infty} \int_\Omega \frac{|u_n-\widetilde u|^p}{d_\Omega^{s\,p}}\,dx=0.
\]
By uniqueness of the limit, we must have $u=\widetilde u$ almost everywhere in $\Omega$. Finally, by using that $L^p(\mathbb{R}^N\times\mathbb{R}^N)$ is a Banach space, we get that there exists $\phi\in L^p(\mathbb{R}^N\times\mathbb{R}^N)$ such that
\[
\lim_{n\to\infty} \|D^s u_n-\phi\|_{L^p(\mathbb{R}^N\times\mathbb{R}^N)}=0.
\]
This in particular would imply that 
\[
\lim_{n\to\infty}D^s u_n(x,y)=\phi(x,y),\qquad \mbox{ for a.\,e. } (x,y)\in\mathbb{R}^N\times\mathbb{R}^N,
\]
up to a subsequence. On the other hand, by using the almost everywhere convergence of $u_n$ previously inferred, we also obtain that 
\[
\lim_{n\to\infty}D^s u_n(x,y)=D^s u(x,y),\qquad \mbox{ for a.\,e. } (x,y)\in\mathbb{R}^N\times\mathbb{R}^N.
\]
By using the uniqueness of the limit, we get at the same time that 
\[
[u]_{W^{s,p}(\mathbb{R}^N)}<+\infty \qquad \mbox{ and }\qquad  \lim_{n\to\infty} \|D^s u_n-D^s u\|_{L^p(\mathbb{R}^N\times\mathbb{R}^N)}=\lim_{n\to\infty} [u_n-u]_{W^{s,p}(\mathbb{R}^N)}=0.
\]
This concludes the proof.
\end{proof}

In the next technical lemma, we show that the summability of a {\it negative} power of the distance implies certain geometric properties of the open set.
\begin{lm}\label{lm:sumdist}
Let $N\ge 1$ and let $\Omega\subsetneq\mathbb{R}^N$ be an open set such that 
\[
\int_\Omega \frac{1}{d_\Omega^\alpha}\,dx<+\infty,
\]
for some $\alpha>0$. Then we must have $\alpha<N$. Moreover, we have the estimates 
\begin{equation}
\label{stimegeo}
r_\Omega\le \left(\frac{2^\alpha}{\omega_N}\,\int_\Omega \frac{1}{d_\Omega^\alpha}\,dx\right)^\frac{1}{N-\alpha}\qquad \mbox{ and }\qquad |\Omega|\le \left(\frac{2^\alpha}{\omega_N}\right)^\frac{\alpha}{N-\alpha}\,\left(\int_\Omega \frac{1}{d_\Omega^\alpha}\,dx\right)^\frac{N}{N-\alpha},
\end{equation}
where $r_\Omega$ is defined in \eqref{inradius}.
\end{lm}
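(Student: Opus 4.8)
The plan is to fix a point $x_0\in\Omega$ and exploit the fact that, by definition of $d_\Omega$, a ball centered at $x_0$ of radius close to $d_\Omega(x_0)$ sits inside $\Omega$, where the distance to the boundary is controlled from above by a simple geometric quantity. More precisely, for any $x_0\in\Omega$ set $r:=d_\Omega(x_0)$. Then $B_{r}(x_0)\subset\Omega$, and for every $x\in B_{r/2}(x_0)$ we have the elementary bound $d_\Omega(x)\le d_\Omega(x_0)+|x-x_0|\le r+|x-x_0|$, hence in particular $d_\Omega(x)\le 2\,r$ on $B_{r/2}(x_0)$. Therefore
\[
\int_\Omega \frac{1}{d_\Omega^\alpha}\,dx\ge \int_{B_{r/2}(x_0)} \frac{1}{d_\Omega^\alpha}\,dx\ge \frac{1}{(2\,r)^\alpha}\,\Big|B_{r/2}(x_0)\Big|=\frac{\omega_N}{2^{N+\alpha}}\,r^{N-\alpha}.
\]
From this single inequality all three assertions will follow.

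First I would argue that $\alpha<N$. Indeed, since $\Omega\subsetneq\mathbb{R}^N$ is open and nonempty, we may let $x_0$ range over $\Omega$; if $\Omega$ is unbounded, or more simply if $r_\Omega=\|d_\Omega\|_{L^\infty(\Omega)}=+\infty$, we can choose $x_0$ with $d_\Omega(x_0)=:r$ arbitrarily large, and then the displayed inequality forces $r^{N-\alpha}$ to stay bounded, which is impossible unless $N-\alpha>0$. If instead $r_\Omega<+\infty$, one still needs $\alpha<N$: here I would note that if $\alpha\ge N$ the function $d_\Omega^{-\alpha}$ fails to be integrable near \emph{any} boundary point, because locally near a boundary point $\Omega$ contains, for each small $\rho$, a ball of radius comparable to $\rho$ on which $d_\Omega\le C\rho$, and summing the contributions of a dyadic family of such balls (or simply using a Besicovitch-type covering argument near $\partial\Omega$, or a one-dimensional slicing in the direction realizing the distance) produces a divergent integral of the form $\sum_k (2^{-k})^{N-\alpha}$. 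In fact the cleanest route is: once we know the estimate for $r_\Omega$ below is finite, the very existence of a finite value on the left of the hypothesis combined with the lower bound $\frac{\omega_N}{2^{N+\alpha}}\,r^{N-\alpha}$ taken with $r$ up to $r_\Omega$ already forces, when $r_\Omega$ could be made large, $\alpha<N$; the genuinely bounded case is handled by the local divergence argument just sketched.

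For the two estimates in \eqref{stimegeo}, assuming now $\alpha<N$: taking the supremum over $x_0\in\Omega$ in the displayed inequality gives $\int_\Omega d_\Omega^{-\alpha}\,dx\ge \frac{\omega_N}{2^{N+\alpha}}\,r_\Omega^{N-\alpha}$, and solving for $r_\Omega$ yields $r_\Omega\le \big(\frac{2^{N+\alpha}}{\omega_N}\int_\Omega d_\Omega^{-\alpha}\,dx\big)^{1/(N-\alpha)}$; a comparison with the claimed constant $2^\alpha/\omega_N$ shows the stated bound is of the same form (and in fact sharper in the power of $2$, so I would re-examine whether the cruder bound $d_\Omega(x)\le 2r$ on the full ball $B_r(x_0)$, giving exactly the factor $2^\alpha/\omega_N$, is what is intended — using $B_r(x_0)$ directly: $\int_\Omega d_\Omega^{-\alpha}\ge (2r)^{-\alpha}|B_r(x_0)|=\omega_N\,2^{-\alpha}\,r^{N-\alpha}$, which is precisely the first inequality in \eqref{stimegeo}). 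For the measure estimate, I would use the Hardy-type elementary inequality $|\Omega|=\int_\Omega 1\,dx=\int_\Omega d_\Omega^{\alpha}\cdot d_\Omega^{-\alpha}\,dx\le r_\Omega^{\alpha}\int_\Omega d_\Omega^{-\alpha}\,dx$, and then insert the just-proved bound on $r_\Omega$, obtaining $|\Omega|\le \big(\frac{2^\alpha}{\omega_N}\big)^{\alpha/(N-\alpha)}\big(\int_\Omega d_\Omega^{-\alpha}\,dx\big)^{\alpha/(N-\alpha)+1}=\big(\frac{2^\alpha}{\omega_N}\big)^{\alpha/(N-\alpha)}\big(\int_\Omega d_\Omega^{-\alpha}\,dx\big)^{N/(N-\alpha)}$, which is exactly the second claim.

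The main obstacle is the first assertion, $\alpha<N$, in the case where $\Omega$ has \emph{finite} inradius but is still, say, unbounded or badly shaped: there the global comparison with one large inscribed ball is unavailable, and one must genuinely localize near the boundary and show that no choice of $\alpha\ge N$ can make $d_\Omega^{-\alpha}$ locally integrable. I expect the clean way to do this is to reduce to a one-dimensional statement along a segment joining a point $x_0\in\Omega$ to its nearest boundary point $\bar x\in\partial\Omega$: on such a segment $d_\Omega$ grows at most linearly from $0$, and a thin tube around it yields $\int \rho^{-\alpha}\,\rho^{N-1}\,d\rho$ near $\rho=0$, which diverges precisely when $\alpha\ge N$. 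Everything else is a routine optimization/Chebyshev argument.
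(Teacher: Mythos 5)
Your handling of the two estimates in \eqref{stimegeo} is correct and essentially identical to the paper's: the lower bound $\int_\Omega d_\Omega^{-\alpha}\,dx\ge \omega_N\,2^{-\alpha}\,d_\Omega(x_0)^{N-\alpha}$ coming from the inscribed ball $B_r(x_0)$ with $r=d_\Omega(x_0)$ and the bound $d_\Omega\le 2\,r$ there, followed by a supremum over $x_0$ and the elementary inequality $|\Omega|\le r_\Omega^\alpha\int_\Omega d_\Omega^{-\alpha}\,dx$.

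The genuine gap is in the proof of the first assertion, $\alpha<N$. Your ``infinite inradius'' branch is logically reversed: from $\int_\Omega d_\Omega^{-\alpha}\,dx\ge \omega_N\,2^{-\alpha}\,r^{N-\alpha}$ with $r$ arbitrarily large one obtains a contradiction precisely when $N-\alpha>0$ (then $r^{N-\alpha}\to\infty$), whereas for $\alpha\ge N$ the quantity $r^{N-\alpha}$ stays bounded and nothing is contradicted; so this branch cannot exclude $\alpha\ge N$ --- it only serves, \emph{once} $\alpha<N$ is known, to deduce $r_\Omega<+\infty$. Your ``finite inradius'' branch is only sketched, and the sketch does not hold up for a general open set: near a boundary point $\Omega$ need not contain interior balls of radius comparable to the distance to that point (consider a cusp), and a \emph{tube} of fixed thickness around the segment joining $x_0$ to its nearest boundary point has constant $(N-1)$-dimensional cross-section, so it would produce $\int_0\rho^{-\alpha}\,d\rho$, which diverges already for $\alpha\ge 1$ --- the wrong threshold. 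What is needed is a cone, not a tube, and this is exactly the paper's argument: letting $\widetilde x_0\in\partial\Omega$ realize $d_\Omega(x_0)=r$, one has $d_\Omega(x)\le|x-\widetilde x_0|$ for every $x\in B_r(x_0)$, and since $\widetilde x_0\in\partial B_r(x_0)$ the ball subtends a fixed positive solid angle at $\widetilde x_0$, so that in spherical coordinates centered at $\widetilde x_0$
\[
\int_\Omega\frac{dx}{d_\Omega^\alpha}\ge\int_{B_r(x_0)}\frac{dx}{|x-\widetilde x_0|^\alpha}\ge c_N\,\int_0^{\delta}\rho^{N-1-\alpha}\,d\rho=+\infty,\qquad \mbox{ for every }\alpha\ge N.
\]
This single computation settles $\alpha<N$ in all cases (bounded or unbounded $\Omega$, including the borderline $\alpha=N$) and should replace both of your branches; the remainder of your argument then goes through unchanged.
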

\begin{proof}
We take $x_0\in\Omega$ and consider the open ball $B_r(x_0)$ with radius $r=d_\Omega(x_0)$. This implies that 
\[
B_r(x_0)\subset\Omega\qquad \mbox{ and }\qquad \partial B_r(x_0)\cap \partial \Omega\not=\emptyset. 
\]
Let us call $\widetilde{x}_0$ a point contained in this intersection. By observing that 
\[
d_\Omega(x)\le |x-\widetilde{x}_0|,\qquad \mbox{ for every } x\in B_{r}(x_0),
\]
we get
\[
+\infty>\int_\Omega \frac{1}{d_\Omega^\alpha}\,dx\ge \int_{B_r(x_0)} \frac{1}{|x-\widetilde{x}_0|^\alpha}\,dx.
\]
By using spherical coordinates, we see that the last integral diverges for $\alpha\ge N$. Thus we get the first statement.
\par
In order to get the claimed estimates, we go on by estimating from below the last integral as follows
\[
+\infty>\int_\Omega \frac{1}{d_\Omega^\alpha}\,dx\ge \frac{1}{2^\alpha\,r^\alpha}\,\int_{B_{r}(x_0)} \,dx=\frac{\omega_N}{2^\alpha}\,r^{N-\alpha}=\frac{\omega_N}{2^\alpha}\,d_\Omega(x_0)^{N-\alpha}.
\]
Since $\alpha<N$ from the first part of the proof, we can take the supremum on $x_0\in \Omega$ and get that the distance function is actually bounded. Moreover, we obtain the first estimate in \eqref{stimegeo}, thus in particular the inradius is finite. In turn, by using this fact we get
\[
\int_\Omega\frac{1}{d_\Omega^\alpha}\,dx\ge \frac{1}{r_\Omega^\alpha} \int_{\Omega}\,dx =\frac{|\Omega|}{r_\Omega^\alpha},
\]
which shows that the volume is finite, as well, together with the second estimate in \eqref{stimegeo}.
This concludes the proof.
\end{proof}
As a straightforward consequence of Lemma \ref{lm:sumdist}, we get the following
\begin{lm}
\label{lm:sfiga}
Let $1<p<\infty$, $0<s<1$ and let $\Omega\subsetneq\mathbb{R}^N$ be an open set. 
Then for $s\,p\ge N$ the unique constant function contained in $\mathcal{X}^{s,p}(\Omega;d_\Omega)$ is the null one. 
\par
The same conclusion holds also for $s\,p<N$, if we additionally suppose that $|\Omega|=+\infty$.
\end{lm}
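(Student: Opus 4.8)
The plan is to reduce membership of a constant in $\mathcal{X}^{s,p}(\Omega;d_\Omega)$ to a single integrability condition, and then quote Lemma \ref{lm:sumdist}. Concretely, for a constant function $u\equiv c$ the only requirement in Definition \ref{def:X} that is not automatically met is $u/d_\Omega^s\in L^p(\Omega)$. Indeed, the Gagliardo seminorm $[u]_{W^{s,p}(\mathbb{R}^N)}$ vanishes identically on constants, since $u(x)-u(y)\equiv 0$; and $u\in L^p_{s\,p}(\mathbb{R}^N)$ is guaranteed by the integrability of $(1+|x|)^{-N-s\,p}$ over $\mathbb{R}^N$ recalled right after \eqref{monoLpesi}. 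Hence $c\in\mathcal{X}^{s,p}(\Omega;d_\Omega)$ if and only if
\[
|c|^p\,\int_\Omega \frac{1}{d_\Omega^{s\,p}}\,dx<+\infty,
\]
which for $c\not=0$ is equivalent to the finiteness of $\int_\Omega d_\Omega^{-s\,p}\,dx$.

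Next I would apply Lemma \ref{lm:sumdist} with $\alpha=s\,p$. In the regime $s\,p\ge N$, that lemma forbids the finiteness of $\int_\Omega d_\Omega^{-s\,p}\,dx$ altogether, so the displayed condition fails unless $c=0$; this gives the first assertion. In the regime $s\,p<N$, assume in addition $|\Omega|=+\infty$: if $\int_\Omega d_\Omega^{-s\,p}\,dx$ were finite, the second estimate in \eqref{stimegeo} (with $\alpha=s\,p$) would force $|\Omega|<+\infty$, a contradiction. Therefore again $\int_\Omega d_\Omega^{-s\,p}\,dx=+\infty$, and only $c=0$ is admissible, which proves the second assertion.

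Since all the geometric and analytic content is already packaged in Lemma \ref{lm:sumdist}, there is essentially no obstacle here: the only point requiring (minimal) care is the reduction step, i.e.\ verifying that both the seminorm condition and the $L^p_{s\,p}(\mathbb{R}^N)$ condition are vacuous for constants, so that membership in $\mathcal{X}^{s,p}(\Omega;d_\Omega)$ is governed solely by the negative-power distance integral. This also makes transparent the role of the hypotheses: they are precisely the two sufficient conditions under which Lemma \ref{lm:sumdist} rules out $\int_\Omega d_\Omega^{-s\,p}\,dx<+\infty$.
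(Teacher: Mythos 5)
Your proposal is correct and is precisely the argument the paper intends: the paper states the lemma as ``a straightforward consequence of Lemma \ref{lm:sumdist}'' without writing out the proof, and your reduction (constants have vanishing Gagliardo seminorm and lie in $L^p_{s\,p}(\mathbb{R}^N)$, so membership hinges only on $\int_\Omega d_\Omega^{-s\,p}\,dx<+\infty$, which Lemma \ref{lm:sumdist} with $\alpha=s\,p$ rules out in both regimes) supplies exactly the omitted details.
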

In the next result we compare the two spaces $\widetilde{W}^{s,p}_0(\Omega)$ and $\mathcal{X}^{s,p}_0(\Omega;d_\Omega)$.
\begin{prop}
\label{prop:fortuna}
Let $1<p<\infty$ and $0<s<1$. Let $\Omega\subsetneq\mathbb{R}^N$ be an open set such that $\mathfrak{h}_{s,p}(\Omega)>0$.
Then we have 
\begin{equation}
\label{inclusion}
\widetilde{W}^{s,p}_0(\Omega)\subseteq \mathcal{X}^{s,p}_0(\Omega;d_{\Omega}),
\end{equation}
and the inclusion is continuous. Moreover, if we assume that $r_\Omega<+\infty$, then
\[
\widetilde{W}^{s,p}_0(\Omega)= \mathcal{X}^{s,p}_0(\Omega;d_{\Omega}),
\]
and 
\begin{equation}
\label{normaX}
\varphi\mapsto [\varphi]_{W^{s,p}(\mathbb{R}^N)},
\end{equation}
is an equivalent norm on this space.
Finally, if we further require that $|\Omega|<+\infty$, then we have the continuous embedding 
\[
\mathcal{X}^{s,p}_0(\Omega; d_{\Omega})\hookrightarrow L^p(\Omega),
\]
and this is compact, as well.
\end{prop}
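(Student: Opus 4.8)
I would establish the three assertions in turn, each time reducing matters to a comparison of the two norms on the common dense subspace $C^\infty_0(\Omega)$. \emph{For the continuous inclusion \eqref{inclusion}:} since $\mathfrak{h}_{s,p}(\Omega)>0$, Hardy's inequality \eqref{hardy} holds on $C^\infty_0(\Omega)$, so that for every $\varphi\in C^\infty_0(\Omega)$
\[
\|\varphi\|_{\mathcal{X}^{s,p}(\Omega;d_\Omega)}=[\varphi]_{W^{s,p}(\mathbb{R}^N)}+\left(\int_\Omega\frac{|\varphi|^p}{d_\Omega^{s\,p}}\,dx\right)^{1/p}\le\left(1+\mathfrak{h}_{s,p}(\Omega)^{-1/p}\right)[\varphi]_{W^{s,p}(\mathbb{R}^N)}\le C\,\|\varphi\|_{W^{s,p}(\mathbb{R}^N)}.
\]
Hence the identity extends to a continuous linear map $T\colon\widetilde{W}^{s,p}_0(\Omega)\to\mathcal{X}^{s,p}_0(\Omega;d_\Omega)$, and one checks that $T$ is the inclusion: if $\varphi_n\to u$ in $W^{s,p}(\mathbb{R}^N)$ with $\varphi_n\in C^\infty_0(\Omega)$, then $\varphi_n\to u$ in $L^p(\mathbb{R}^N)$ and, by \eqref{boh!}, $\varphi_n\to Tu$ in $L^p_{s\,p}(\mathbb{R}^N)$, so along a subsequence both convergences hold almost everywhere and $Tu=u$. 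This proves \eqref{inclusion} with continuous inclusion.

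\emph{For the equality when $r_\Omega<+\infty$:} here I would use that every $\varphi\in C^\infty_0(\Omega)$ vanishes outside $\Omega$ and satisfies
\[
\int_{\mathbb{R}^N}|\varphi|^p\,dx=\int_\Omega\frac{|\varphi|^p}{d_\Omega^{s\,p}}\,d_\Omega^{s\,p}\,dx\le r_\Omega^{s\,p}\,\int_\Omega\frac{|\varphi|^p}{d_\Omega^{s\,p}}\,dx,
\]
so that $\|\varphi\|_{W^{s,p}(\mathbb{R}^N)}\le\max\{1,r_\Omega^{s}\}\,\|\varphi\|_{\mathcal{X}^{s,p}(\Omega;d_\Omega)}$. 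Together with the estimate of the previous step, the two norms are equivalent on $C^\infty_0(\Omega)$; since both $\widetilde{W}^{s,p}_0(\Omega)$ and $\mathcal{X}^{s,p}_0(\Omega;d_\Omega)$ embed continuously into $L^p_{s\,p}(\mathbb{R}^N)$, their closures coincide as subspaces of $L^p_{s\,p}(\mathbb{R}^N)$, whence $\widetilde{W}^{s,p}_0(\Omega)=\mathcal{X}^{s,p}_0(\Omega;d_\Omega)$. That $\varphi\mapsto[\varphi]_{W^{s,p}(\mathbb{R}^N)}$ is an equivalent norm on this space is then exactly the content of Remark \ref{oss:estendi}.

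\emph{For the compact embedding when, in addition, $|\Omega|<+\infty$:} note first that $|\Omega|<+\infty$ forces $r_\Omega<+\infty$ (a ball of radius $d_\Omega(x_0)$ sits inside $\Omega$), so the identification above applies and continuity of $\mathcal{X}^{s,p}_0(\Omega;d_\Omega)\hookrightarrow L^p(\Omega)$ is immediate from $\int_\Omega|u|^p\,dx\le r_\Omega^{s\,p}\int_\Omega|u|^p/d_\Omega^{s\,p}\,dx$. For compactness I would invoke the Fr\'echet--Kolmogorov criterion in $L^p(\mathbb{R}^N)$ for the set $\mathcal{F}=\{u\in\mathcal{X}^{s,p}_0(\Omega;d_\Omega):\|u\|_{\mathcal{X}^{s,p}(\Omega;d_\Omega)}\le1\}$, whose elements vanish almost everywhere outside $\Omega$ (again by a.e.\ convergence along an approximating sequence). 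Boundedness in $L^p$ is clear; equi-continuity of translations is the standard estimate $\|u(\cdot+h)-u\|_{L^p(\mathbb{R}^N)}\le C(N,s,p)\,|h|^s\,[u]_{W^{s,p}(\mathbb{R}^N)}$, valid whenever the Gagliardo seminorm is finite, which is bounded by $C\,|h|^s$ on $\mathcal{F}$; and for the equi-tightness I would use $|\Omega|<+\infty$ together with a Sobolev embedding: by the fractional Sobolev inequality there is $q\in(p,+\infty]$ with $\widetilde{W}^{s,p}_0(\Omega)\hookrightarrow L^q(\Omega)$ continuously (one may take $q=Np/(N-s\,p)$ if $s\,p<N$, any finite $q>p$ if $s\,p=N$, and $q=+\infty$ if $s\,p>N$), so that by H\"older
\[
\int_{\mathbb{R}^N\setminus B_R}|u|^p\,dx=\int_{\Omega\setminus B_R}|u|^p\,dx\le|\Omega\setminus B_R|^{1-p/q}\,\|u\|_{L^q(\Omega)}^p\le C\,|\Omega\setminus B_R|^{1-p/q}\longrightarrow0
\]
uniformly on $\mathcal{F}$ as $R\to+\infty$, since $|\Omega\setminus B_R|\to0$ (with the usual convention when $q=+\infty$). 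Hence $\mathcal{F}$ is relatively compact in $L^p(\mathbb{R}^N)$, and therefore in $L^p(\Omega)$.

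The step I expect to be most delicate is the last one: because $\Omega$ is only assumed to have finite measure and need not be bounded, compactness does not follow from a Rellich-type theorem on a fixed bounded domain, and the point is to balance the vanishing of $|\Omega\setminus B_R|$ against the higher integrability furnished by the fractional Sobolev inequality — which is where the assumption $\mathfrak{h}_{s,p}(\Omega)>0$ re-enters, through the identification $\mathcal{X}^{s,p}_0(\Omega;d_\Omega)=\widetilde{W}^{s,p}_0(\Omega)$. The translation estimate, by contrast, is routine and may simply be quoted from the literature on the Besov-space characterization of $W^{s,p}$.
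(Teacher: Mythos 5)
Your argument is correct, and for the first two assertions it coincides with the paper's own proof: both rest on the observation that Hardy's inequality makes $\|\cdot\|_{\mathcal{X}^{s,p}(\Omega;d_\Omega)}$ controlled by $\|\cdot\|_{W^{s,p}(\mathbb{R}^N)}$ on $C^\infty_0(\Omega)$, and that the elementary bound $\int_\Omega|\varphi|^p\,dx\le r_\Omega^{s\,p}\int_\Omega|\varphi|^p\,d_\Omega^{-s\,p}\,dx$ reverses the comparison when $r_\Omega<+\infty$, so that the two closures coincide; your extra care in identifying the abstract extension $T$ with the set-theoretic inclusion (via \eqref{boh!} and a.e.\ convergence along a subsequence) is a point the paper leaves implicit. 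The only genuine divergence is in the final claim: the paper disposes of the compact embedding $\mathcal{X}^{s,p}_0(\Omega;d_\Omega)\hookrightarrow L^p(\Omega)$ in one line, as an easy consequence of the same well-known property of $\widetilde{W}^{s,p}_0(\Omega)$ for sets of finite measure, whereas you prove that property from scratch via the Fr\'echet--Kolmogorov criterion, combining the translation estimate $\|u(\cdot+h)-u\|_{L^p(\mathbb{R}^N)}\le C\,|h|^s\,[u]_{W^{s,p}(\mathbb{R}^N)}$ with equi-tightness obtained from the fractional Sobolev embedding and $|\Omega\setminus B_R|\to 0$. This costs some space but makes the proposition self-contained, correctly handles the case of a finite-measure but unbounded $\Omega$ (where a Rellich theorem on a fixed bounded domain would not apply), and usefully records that $|\Omega|<+\infty$ already forces $r_\Omega<+\infty$, so the third hypothesis subsumes the second. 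Both routes are sound.
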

\begin{proof} By recalling Remark \ref{oss:estendi}, we know that \eqref{normaX} is an equivalent norm on $\mathcal{X}^{s,p}_0(\Omega;d_\Omega)$. Since we trivially have 
\[
[\varphi]_{W^{s,p}(\mathbb{R}^N)} \le \|\varphi\|_{W^{s,p}(\mathbb{R}^N)},\qquad \mbox{ for every } \varphi\in C^\infty_0(\Omega),
\]
the continuous inclusion \eqref{inclusion} easily follows. 
\par
We now assume that $r_\Omega<+\infty$. In conjuction with Hardy's inequality and recalling \eqref{inradius}, this yields
\[
\int_\Omega |\varphi|^p\,dx\le r_\Omega^{s\,p}\,\int_\Omega \frac{|\varphi|^p}{d_\Omega^{s\,p}}\,dx\le \frac{r_\Omega^{s\,p}}{\mathfrak{h}_{s,p}(\Omega)}\,[\varphi]^p_{W^{s,p}(\mathbb{R}^N)},\qquad \mbox{ for every } \varphi\in C^\infty_0(\Omega). 
\]
Thus we get that 
\[
\varphi\mapsto \|\varphi\|_{\mathcal{X}^{s,p}(\mathbb{R}^N)}\qquad \mbox{ and }\qquad \varphi\mapsto \|\varphi\|_{W^{s,p}(\mathbb{R}^N)},
\]
are equivalent norms on $C^\infty_0(\Omega)$, again thanks to Remark \ref{oss:estendi}. Then the claimed identity of the two closures immediately follows. The last statement is now an easy consequence of the same property for the space $\widetilde{W}^{s,p}_0(\Omega)$, which is well-known.
\end{proof}
\begin{oss}
\label{oss:regolamento}
Under the sole assumption that $\mathfrak{h}_{s,p}(\Omega)>0$, in general we have
\[
\widetilde{W}^{s,p}_0(\Omega) \subset \mathcal{X}^{s,p}_0(\Omega; d_{\Omega})\qquad \mbox{ and }\qquad \widetilde{W}^{s,p}_0(\Omega) \not= \mathcal{X}^{s,p}_0(\Omega; d_{\Omega}),
\]
contrary to what incorrectly stated in \cite[Theorem 5.1]{KK}, for the local case $s=1$. 
As a counter-example, it is sufficient to take any open set $\Omega\subsetneq\mathbb{R}^N$ such that 
\[
\mathfrak{h}_{s,p}(\Omega)>0\qquad \mbox{ and }\qquad \inf_{\varphi\in C^\infty_0(\Omega)} \left\{[\varphi]^p_{W^{s,p}(\mathbb{R}^N)}\, :\, \int_\Omega |\varphi|^p\,dx=1\right\}=0.
\]
For example, we can take $\Omega$ to be a half-space. In such a case, we have by construction
\[
\widetilde{W}^{s,p}_0(\Omega)\hookrightarrow L^p(\Omega),
\]
while
\[
\mathcal{X}^{s,p}_0(\Omega;d_\Omega)\not\hookrightarrow L^p(\Omega).
\]
\end{oss}

We can finally prove a compactness result for the space $\mathcal{X}^{s,p}_0(\Omega)$, under minimal assumptions on the open set $\Omega$. This will be crucially exploited in the proof of Lemma \ref{lm:1}.
\begin{teo}
\label{lm:X}
Let $1<p<\infty$, $0<s<1$ and let $\Omega\subsetneq\mathbb{R}^N$ be an open set such that $\mathfrak{h}_{s,p}(\Omega)>0$.
Let $\{u_n\}_{n \in \mathbb{N}}\subset \mathcal{X}^{s,p}_0(\Omega;d_\Omega)$ be such that
\[
[u_n]^p_{W^{s,p}(\mathbb{R}^N)}\le M,\qquad \mbox{ for every } n\in\mathbb{N},
\]
for some $M>0$. Then there exist a function $u \in \mathcal{X}^{s,p}_0(\Omega; d_{\Omega})$ and subsequence $\{u_{n_k}\}_{k\in\mathbb{N}}$ such that
\[ 
\lim_{k\to\infty}u_{n_k}(x)= u(x), \qquad \mbox{ for a.\,e. } x\in \Omega. 
\] 
Moreover, for every $\Omega'\Subset\Omega$, we also have
\[
\lim_{k\to\infty} \|u_{n_k}-u\|_{L^p(\Omega')}=0,
\]
up to a possible further subsequence.
\end{teo}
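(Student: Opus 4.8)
The plan is to prove the compactness statement of Theorem \ref{lm:X} by combining the a priori bound on the Gagliardo seminorm with a localization argument on balls $\Omega'\Subset\Omega$, where the fractional Sobolev space $W^{s,p}$ enjoys its standard Rellich--Kondrachov compactness, and then upgrading the resulting local convergence to a global almost everywhere statement together with the limit belonging to $\mathcal{X}^{s,p}_0(\Omega;d_\Omega)$. First I would exhaust $\Omega$ by an increasing sequence of bounded open sets $\Omega_j\Subset\Omega$ with $\bigcup_j \Omega_j=\Omega$, say $\Omega_j=\{x\in\Omega:d_\Omega(x)>1/j\}\cap B_j(0)$. On each fixed $\Omega_j$, I claim that $\{u_n\}$ is bounded in $W^{s,p}(\Omega_j)$: the seminorm $[u_n]_{W^{s,p}(\Omega_j)}\le[u_n]_{W^{s,p}(\mathbb{R}^N)}\le M^{1/p}$ is controlled by hypothesis, while the $L^p(\Omega_j)$ norm is controlled because on $\Omega_j$ the distance $d_\Omega$ is bounded below by $1/j$, so $\int_{\Omega_j}|u_n|^p\,dx\le j^{-sp}\int_\Omega |u_n|^p/d_\Omega^{sp}\,dx\le j^{-sp}\,\mathfrak{h}_{s,p}(\Omega)^{-1}M$ by Hardy's inequality in $\mathcal{X}^{s,p}_0(\Omega;d_\Omega)$ (Remark \ref{oss:estendi}), uniformly in $n$.

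Next I would invoke the compact embedding $W^{s,p}(\Omega_j)\hookrightarrow L^p(\Omega_j)$ (valid since $\Omega_j$ is bounded with smooth, or at least Lipschitz, boundary — one can always arrange $\Omega_j$ to be a finite union of balls) to extract, on $\Omega_1$, a subsequence converging in $L^p(\Omega_1)$ and a.\,e.\ on $\Omega_1$; then pass to a further subsequence converging in $L^p(\Omega_2)$ and a.\,e.\ on $\Omega_2$, and iterate. A diagonal extraction yields a single subsequence $\{u_{n_k}\}$ and a measurable function $u$ on $\Omega$ with $u_{n_k}\to u$ in $L^p(\Omega_j)$ for every $j$ and $u_{n_k}\to u$ a.\,e.\ on $\Omega$. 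In particular $\|u_{n_k}-u\|_{L^p(\Omega')}\to0$ for every $\Omega'\Subset\Omega$, since any such $\Omega'$ is contained in some $\Omega_j$.

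It remains to show $u\in\mathcal{X}^{s,p}_0(\Omega;d_\Omega)$. For the seminorm, I would use Fatou's lemma on $D^s u_{n_k}(x,y)=(u_{n_k}(x)-u_{n_k}(y))/|x-y|^{N/p+s}$, which converges a.\,e.\ on $\mathbb{R}^N\times\mathbb{R}^N$ to $D^s u$ (extending $u$ by the a.\,e.\ limit, which exists a.\,e.\ on all of $\mathbb{R}^N$ after one more diagonal step using the estimate \eqref{boh!} to get a.\,e.\ convergence on $\mathbb{R}^N$ via $L^p_{sp}(\mathbb{R}^N)$), giving $[u]_{W^{s,p}(\mathbb{R}^N)}\le\liminf_k[u_{n_k}]_{W^{s,p}(\mathbb{R}^N)}\le M^{1/p}$. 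For the weighted term, Fatou again on $\Omega$ gives $\int_\Omega |u|^p/d_\Omega^{sp}\,dx\le\liminf_k\int_\Omega|u_{n_k}|^p/d_\Omega^{sp}\,dx\le\mathfrak{h}_{s,p}(\Omega)^{-1}M$, so $u\in\mathcal{X}^{s,p}(\Omega;d_\Omega)$. To get membership in the \emph{closed subspace} $\mathcal{X}^{s,p}_0$, I would observe that $\mathcal{X}^{s,p}_0(\Omega;d_\Omega)$ is a closed subspace of the reflexive Banach space $\mathcal{X}^{s,p}(\Omega;d_\Omega)$ — reflexivity coming from the fact that, via $u\mapsto(D^s u, u/d_\Omega^s)$, it embeds isometrically as a closed subspace of $L^p(\mathbb{R}^N\times\mathbb{R}^N)\times L^p(\Omega;d_\Omega^{-sp})$, hence is reflexive — so the bounded sequence $\{u_{n_k}\}$ has a weakly convergent subsubsequence in $\mathcal{X}^{s,p}_0$, and since the weak limit must agree with the a.\,e.\ limit $u$, we conclude $u\in\mathcal{X}^{s,p}_0(\Omega;d_\Omega)$.

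The main obstacle I anticipate is the last point: ensuring the almost-everywhere limit $u$ genuinely lies in the $C^\infty_0(\Omega)$-closure $\mathcal{X}^{s,p}_0$ and not merely in the larger space $\mathcal{X}^{s,p}$ — strong local convergence and Fatou bounds alone only place $u$ in $\mathcal{X}^{s,p}(\Omega;d_\Omega)$. The clean way around this is the weak-compactness/convexity argument just sketched (Mazur's lemma would also do), which exploits that $\mathcal{X}^{s,p}_0$ is convex and norm-closed, hence weakly closed; one must be a little careful to verify that weak convergence in $\mathcal{X}^{s,p}(\Omega;d_\Omega)$ implies, say, convergence in $\mathcal{D}'(\Omega)$ so that it can be matched with the pointwise limit. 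A secondary technical point is the choice of the exhausting sets $\Omega_j$ with boundary regular enough to apply the standard fractional Rellich theorem; taking each $\Omega_j$ to be a finite union of open balls compactly contained in $\Omega$ (which is possible by a routine covering argument since $\overline{\Omega_j}$ is compact) sidesteps any regularity worry.
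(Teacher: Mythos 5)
Your overall skeleton --- exhaustion of $\Omega$, local $W^{s,p}$ bounds via Hardy's inequality, Rellich compactness on each piece, diagonal extraction, Fatou for the seminorm and the weighted term --- coincides with the paper's treatment of the case $|\Omega|=+\infty$. Where you genuinely diverge is in the delicate final step of showing that the a.e.\ limit lies in $\mathcal{X}^{s,p}_0(\Omega;d_\Omega)$ and not merely in $\mathcal{X}^{s,p}(\Omega;d_\Omega)$. The paper applies Mazur's Lemma to $\{D^s u_n\}$ in $L^p(\mathbb{R}^N\times\mathbb{R}^N)$, obtains a strong $\mathcal{X}^{s,p}_0$-limit $u$ of convex combinations, and --- since the limit is then identified only through $D^s$ --- can only conclude that the a.e.\ limit $U$ equals $u+c$ for some constant $c$; killing that constant requires Lemma \ref{lm:sfiga} and is the reason for the case distinction between finite and infinite volume. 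Your route --- reflexivity of $\mathcal{X}^{s,p}(\Omega;d_\Omega)$ via the isometric embedding $u\mapsto(D^su,\,u\,d_\Omega^{-s})$ onto a closed subspace of $L^p(\mathbb{R}^N\times\mathbb{R}^N)\times L^p(\Omega)$, weak sequential compactness of the bounded sequence in the norm-closed (hence weakly closed) subspace $\mathcal{X}^{s,p}_0$, and identification of the weak limit with the a.e.\ limit through the pairing with $L^{p'}(\Omega')$, which is a continuous functional on $\mathcal{X}^{s,p}_0$ because $d_\Omega$ is bounded on $\Omega'\Subset\Omega$ --- identifies the limit through the function itself rather than through $D^s$, so no additive constant ever appears and Lemma \ref{lm:sfiga} together with the case distinction becomes unnecessary. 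This is a legitimate and arguably cleaner argument; all the ingredients (completeness of $\mathcal{X}^{s,p}$, hence closedness of the image of the isometry, hence reflexivity) are already available from Proposition \ref{lm:banach}.

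Two local slips should be repaired, though neither is fatal. First, the bound $\int_{\Omega_j}|u_n|^p\,dx\le j^{-sp}\int_\Omega|u_n|^p\,d_\Omega^{-sp}\,dx$ uses the lower bound $d_\Omega>1/j$ in the wrong direction: to dominate $|u_n|^p$ by $|u_n|^p/d_\Omega^{sp}$ you need $d_\Omega$ bounded \emph{above} on $\Omega_j$, and the correct constant is $\|d_\Omega\|^{sp}_{L^\infty(\Omega_j)}$, which is finite because $\Omega_j$ is bounded and $\partial\Omega\neq\emptyset$; this is the form the paper uses. Second, a.e.\ convergence on all of $\mathbb{R}^N$ does not follow from ``one more diagonal step via $L^p_{sp}(\mathbb{R}^N)$'': boundedness in that space gives no pointwise information off $\Omega$. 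The correct and immediate fix is that every element of $\mathcal{X}^{s,p}_0(\Omega;d_\Omega)$ vanishes a.e.\ on $\mathbb{R}^N\setminus\Omega$, since \eqref{boh!} forces $\mathcal{X}^{s,p}$-limits of $C^\infty_0(\Omega)$ functions to converge in $L^p_{sp}(\mathbb{R}^N)$ and hence a.e.\ along a subsequence; thus the whole sequence is identically zero off $\Omega$ and the extension of $u$ by zero is trivially the a.e.\ limit there.
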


\begin{proof}
We need two distinguish two cases: either $|\Omega|<+\infty$ or $|\Omega|=+\infty$.
\vskip.2cm\noindent
{\it Case 1: $\Omega$ has finite volume.} This is the easiest case: here the result plainly follows from Proposition \ref{prop:fortuna}. We also observe that the last statement actually holds in a stronger from, since we can infer convergence in $L^p(\Omega)$. 
\vskip.2cm\noindent
{\it Case 2: $\Omega$ has infinite volume.} We still use the notation $D^s \varphi$ for a measurable function, as in Proposition \ref{lm:banach}. Thus, by assumption, we get that $\{D^s u_n\}_{n\in\mathbb{N}}$ is a bounded sequence in $L^p(\mathbb{R}^N\times\mathbb{R}^N)$. This entails that, up to a subsequence, it is weakly converging in $L^p(\mathbb{R}^N\times \mathbb{R}^N)$. Let us call $\phi$ such a limit. We may apply Mazur's Lemma (see \cite[Theorem 2.13]{LL}) and get that for every $n\in\mathbb{N}$ there exists 
	\[
\big\{t_\ell(n)\big\}_{\ell=0}^n\subset[0,1],\qquad \mbox{ such that }
\qquad	\sum_{\ell=0}^n t_\ell(n)=1,
	\]
and such that the new sequence made of convex combinations
	\[
	\widetilde{\phi}_{n}(x,y)=\sum_{\ell=0}^n t_\ell(n)\,D^s u_\ell(x,y),
	\]
strongly converges in $L^p(\mathbb{R}^N\times\mathbb{R}^N)$ to $\phi$, as $n$ goes to $\infty$. Observe that by construction we have
\[
\sum_{\ell=0}^n t_\ell(n)\,D^s u_\ell=D^s\left(\sum_{\ell=0}^n t_\ell(n)\,u_\ell\right),
\]
and
\[
\widetilde{u}_n:=\sum_{\ell=0}^n t_\ell(n)\,u_\ell\in \mathcal{X}^{s,p}_0(\Omega;d_\Omega),
\]
since the latter is a vector space.
This proves that $\{D^s \widetilde{u}_n\}_{n\in\mathbb{N}}$ is a Cauchy sequence in $L^p(\mathbb{R}^N\times\mathbb{R}^N)$ and this, in turn, implies that $\{\widetilde u_n\}_{n\in\mathbb{N}}$ is a Cauchy sequence in $\mathcal{X}^{s,p}_0(\Omega;d_\Omega)$, thanks to Remark \ref{oss:estendi}. By using that $\mathcal{X}^{s,p}_0(\Omega;d_\Omega)$ is a Banach space, we get that $\{\widetilde u_n\}_{n\in\mathbb{N}}$ converges in this space to a limit function $u\in \mathcal{X}^{s,p}_0(\Omega;d_\Omega)$. In particular, we must have 
\[
D^s u=\phi.
\]
We now want to prove that $\{u_n\}_{n\in\mathbb{N}}$ converges almost everywhere on $\mathbb{R}^N$ to the function $u$, up to a subsequence. We first observe that all the elements of $\mathcal{X}^{s,p}_0(\Omega;d_\Omega)$ vanish almost everywhere in $\mathbb{R}^N\setminus\Omega$, by construction. Thus we only need to prove convergence almost everywhere in $\Omega$.
\par
We denote by $\{\Omega_k\}_{k \in \mathbb{N}}$ an exhausting sequence for $\Omega$, made of
bounded open subsets with smooth boundary: in other words 
\[ 
\Omega_k\Subset\Omega,\quad \Omega_k \Subset \Omega_{k+1} \mbox{ for every }k\in\mathbb{N}\quad \mbox{ and }\quad \bigcup_{k \in \mathbb{N}} \Omega_k = \Omega,
\] 
see \cite[Proposition 8.2.1]{Dan}.
We preliminary observe that, thanks to the assumption $\mathfrak{h}_{s,p}(\Omega)>0$, we have for every $k,n\in\mathbb{N}$
	\[
	\int_{\Omega_k} |u_n|^p \, dx \le \|d_\Omega\|_{L^{\infty}({\Omega_k})}^{s\,p} \int_{\Omega_k} \frac{|u_n|^p}{d_{\Omega}^{s\,p}} \, dx \le \frac{1}{\mathfrak{h}_{s,p}(\Omega)}\, \|d_\Omega\|_{L^{\infty}({\Omega_k})}^{s\,p}\, [u_n]^p_{W^{s,p}(\mathbb{R}^N)}\le C_k\,M ,
	\]
which entails that $\{u_n\}_{n \in \mathbb{N}}$ is a bounded sequence in each $W^{s,p}(\Omega_k)$. By using the compactness of the embedding $W^{s,p}({\Omega_k}) \hookrightarrow L^p({\Omega_k})$ (see for example \cite[Theorem 7.1]{DPV}) and a diagonal argument, we can obtain existence of a function $U\in W^{s,p}_{\rm loc}(\Omega)$ and of a subsequence $\{u_{n_k}\}_{k\in\mathbb{N}}$ such that
\[ 
\lim_{k\to\infty}u_{n_k}(x)= U(x), \qquad \mbox{ for a.\,e. } x\in \Omega. 
\] 
We then extend $U$ to be $0$ outside $\Omega$: by using Fatou's Lemma and the almost everywhere convergence, we get
\[
[U]^p_{W^{s,p}(\mathbb{R}^N)}\le \lim_{k\to\infty} [u_{n_k}]_{W^{s,p}(\mathbb{R}^N)}^p\le M.
\]
By further using Hardy's inequality and \eqref{boh!}, we also get
\[
\int_{\Omega} \frac{|U|^p}{d_\Omega^{s\,p}}\,dx\le \liminf_{k\to\infty}\int_{\Omega} \frac{|u_{n_k}|^p}{d_\Omega^{s\,p}}\,dx\le \frac{M}{\mathfrak{h}_{s,p}(\Omega)},
\]
and
\[
\begin{split}
\left(\int_{\mathbb{R}^N} \frac{|U|^p}{(1+|x|)^{N+s\,p}}\,dx\right)^\frac{1}{p}&\le\liminf_{k\to\infty} \left(\int_{\mathbb{R}^N} \frac{|u_{n_k}|^p}{(1+|x|)^{N+s\,p}}\,dx\right)^\frac{1}{p}\\
&\le C_\Omega\,\liminf_{k\to\infty} \left[[u_{n_k}]_{W^{s,p}(\mathbb{R}^N)} + \left(\int_{\Omega} \frac{|u_{n_k}|^p}{d_{\Omega}^{s\,p}} \,dx \right)^\frac{1}{p}\right]\le \widetilde{C}.
\end{split}
\]
This shows that 
\[
U\in \mathcal{X}^{s,p}(\Omega;d_\Omega).
\]
We now observe that from the first part of the proof, by uniqueness of the weak limit we must have 
\[
D^s u=D^s U,\qquad \mbox{ a.\,e. in } \mathbb{R}^N\times\mathbb{R}^N.
\]
By recalling the definition of $D^s$, this in turn implies that there exists a constant $c\in\mathbb{R}$ such that 
\[
u=U+c,\qquad \mbox{ a.\,e. in } \mathbb{R}^N.
\]
By using that $\mathcal{X}^{s,p}(\Omega;d_\Omega)$ is a vector space, the function constantly equal to $c$ must belong to $\mathcal{X}^{s,p}(\Omega;d_\Omega)$. In light of Lemma \ref{lm:sfiga}, we get that $c=0$ and thus the desired conclusion holds.
\end{proof}

\section{Proof of Theorem \ref{teo:duale}}
\label{sec:4}

\begin{lm}
	\label{lm:2}
	Let $1<p<\infty$, $0<s<1$ and let $\Omega\subsetneq\mathbb{R}^N$ be an open set. Then:
	\begin{itemize}
		\item [{\it (i)}] if there exists $\lambda\ge 0$ such that the equation \eqref{equazione}
		admits a positive local weak supersolution $u$, then $\lambda\le \mathfrak{h}_{s,p}(\Omega)$;
		\vskip.2cm
		\item[{\it (ii)}] in particular, if $u$ is a solution in $ \widetilde{W}^{s,p}_0(\Omega)$,
		then $\lambda=\mathfrak{h}_{s,p}(\Omega)$ and $u$ is a minimizer for $\mathfrak{h}_{s,p}(\Omega)$.
	\end{itemize} 
\end{lm}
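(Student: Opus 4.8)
The plan is to prove \emph{(i)} by testing the supersolution against a Picone-type competitor, and then to deduce \emph{(ii)} by using the solution itself as a test function.

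For \emph{(i)}, let $u$ be a positive local weak supersolution of \eqref{equazione} for some $\lambda\ge 0$. It is enough to show that
\[
\lambda\,\int_\Omega \frac{|\psi|^p}{d_\Omega^{s\,p}}\,dx\le [\psi]_{W^{s,p}(\mathbb{R}^N)}^p,\qquad \mbox{ for every } \psi\in C^\infty_0(\Omega),
\]
since taking the infimum over such $\psi$ gives exactly $\lambda\le\mathfrak{h}_{s,p}(\Omega)$. The key tool is the \emph{discrete Picone inequality}: for every $a,b\ge 0$ and $c,d>0$,
\[
J_p(c-d)\left(\frac{a^p}{c^{p-1}}-\frac{b^p}{d^{p-1}}\right)\le|a-b|^p,
\]
which is by now classical. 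Fixing $\psi\in C^\infty_0(\Omega)$ and $\epsilon>0$, I would apply it with $a=|\psi(x)|$, $b=|\psi(y)|$, $c=u(x)+\epsilon$, $d=u(y)+\epsilon$; recalling $c-d=u(x)-u(y)$ and $\big||\psi(x)|-|\psi(y)|\big|\le|\psi(x)-\psi(y)|$, dividing by $|x-y|^{N+s\,p}$ and integrating gives
\[
\iint_{\mathbb{R}^N\times\mathbb{R}^N}\frac{J_p(u(x)-u(y))\,(\varphi_\epsilon(x)-\varphi_\epsilon(y))}{|x-y|^{N+s\,p}}\,dx\,dy\le[\psi]_{W^{s,p}(\mathbb{R}^N)}^p,\qquad \varphi_\epsilon:=\frac{|\psi|^p}{(u+\epsilon)^{p-1}}.
\]

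The point I expect to be the main obstacle is checking that $\varphi_\epsilon$ is an admissible test function in \eqref{weaksuper}: $\varphi_\epsilon\ge 0$ and $\mathrm{supp}\,\varphi_\epsilon=\mathrm{supp}\,\psi\Subset\Omega$ are immediate, but proving $\varphi_\epsilon\in W^{s,p}(\mathbb{R}^N)$ is delicate, since $u$ lies only in $W^{s,p}_{\rm loc}(\Omega)$ and may be unbounded near $\mathrm{supp}\,\psi$. I would write $\varphi_\epsilon=|\psi|^p\,g_\epsilon$ with $g_\epsilon:=(u+\epsilon)^{1-p}$, so that $0<g_\epsilon\le\epsilon^{1-p}$; decompose $\varphi_\epsilon(x)-\varphi_\epsilon(y)=|\psi(x)|^p\,(g_\epsilon(x)-g_\epsilon(y))+g_\epsilon(y)\,(|\psi(x)|^p-|\psi(y)|^p)$; bound the second contribution by $\epsilon^{1-p}\,[\,|\psi|^p\,]_{W^{s,p}(\mathbb{R}^N)}$; and control the first one through the elementary estimate $|g_\epsilon(x)-g_\epsilon(y)|\le(p-1)\,\epsilon^{-p}\,|u(x)-u(y)|$, together with $u\in W^{s,p}(\Omega')$ on a compact neighbourhood $\Omega'\Subset\Omega$ of $\mathrm{supp}\,\psi$ and the Lipschitz bound $|\psi(x)|\le\|\nabla\psi\|_{L^\infty}\,\mathrm{dist}(x,\mathbb{R}^N\setminus\mathrm{supp}\,\psi)$, the latter being used to absorb the interaction between $\mathrm{supp}\,\psi$ and its complement. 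Once $\varphi_\epsilon$ is admissible, the well-posedness observation following Definition \ref{defi:subsuper} guarantees that the integral above is finite, and the supersolution inequality bounds it from below by $\lambda\int_\Omega \frac{u^{p-1}}{(u+\epsilon)^{p-1}}\,\frac{|\psi|^p}{d_\Omega^{s\,p}}\,dx$. Letting $\epsilon\downarrow 0$ and using that $u>0$ a.e.\ in $\Omega$ (dominated convergence, the integrand being dominated by $|\psi|^p/d_\Omega^{s\,p}$, which is bounded with compact support) then yields the claimed inequality, hence \emph{(i)}.

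For \emph{(ii)}, let $u$ be a positive local weak solution belonging to $\widetilde{W}^{s,p}_0(\Omega)$. First, $\lambda>0$: if $\lambda=0$, testing the equation with $u$ (allowed by the density argument recalled after Definition \ref{defi:subsuper}) would give $[u]_{W^{s,p}(\mathbb{R}^N)}=0$, hence $u\equiv 0$, contradicting positivity. By \emph{(i)} we then have $0<\lambda\le\mathfrak{h}_{s,p}(\Omega)$, so $\mathfrak{h}_{s,p}(\Omega)>0$ and, by Remark \ref{oss:estendi}, Hardy's inequality holds on $\widetilde{W}^{s,p}_0(\Omega)\subseteq\mathcal{X}^{s,p}_0(\Omega;d_\Omega)$; in particular $0<\int_\Omega|u|^p/d_\Omega^{s\,p}\,dx<+\infty$. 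Taking $\varphi=u$ in the weak formulation of \eqref{equazione} now produces
\[
[u]_{W^{s,p}(\mathbb{R}^N)}^p=\lambda\,\int_\Omega\frac{|u|^p}{d_\Omega^{s\,p}}\,dx.
\]
Dividing by $\int_\Omega|u|^p/d_\Omega^{s\,p}\,dx$ and invoking Hardy's inequality once more, the left-hand side is $\ge\mathfrak{h}_{s,p}(\Omega)$, hence $\lambda\ge\mathfrak{h}_{s,p}(\Omega)$; combined with \emph{(i)} this forces $\lambda=\mathfrak{h}_{s,p}(\Omega)$, and the displayed identity says precisely that $u$ realizes the Rayleigh quotient defining $\mathfrak{h}_{s,p}(\Omega)$, i.e.\ $u$ is a minimizer. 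Beyond the admissibility check in \emph{(i)}, all remaining steps are routine.
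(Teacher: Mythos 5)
Your proposal is correct and follows essentially the same route as the paper: for \emph{(i)} you test the supersolution with $|\psi|^p/(u+\epsilon)^{p-1}$, apply the discrete Picone inequality, and let $\epsilon\to 0$; for \emph{(ii)} you test the equation with $u$ itself and compare with the definition of $\mathfrak{h}_{s,p}(\Omega)$. The only differences are that you sketch the admissibility of the test function by hand (the paper delegates this to \cite[Lemma 2.7]{BBZ}) and that you spell out the edge cases in \emph{(ii)} (ruling out $\lambda=0$ and checking $0<\int_\Omega |u|^p\,d_\Omega^{-s\,p}\,dx<+\infty$), which the paper leaves implicit.
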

\begin{proof}
	In order to prove {\it (i)}, for every $\eta\in C^\infty_0(\Omega)$, we test the weak formulation with 
	\[
	\varphi=\frac{|\eta|^p}{(\varepsilon+u)^{p-1}},
	\]
	where $\varepsilon>0$.
	We observe that this is a feasible test function, thanks to \cite[Lemma 2.7]{BBZ}.
	By using the {\it discrete Picone inequality} (see \cite[Lemma 2.6]{FSspace} or \cite[Proposition 4.2]{BF}), we obtain
	\[
	\begin{split}
	\lambda\,\int_\Omega \frac{u^{p-1}}{d_\Omega^{s\,p}}\,\frac{|\eta|^p}{(\varepsilon+u)^{p-1}}\,dx&\le\iint_{\mathbb{R}^N\times \mathbb{R}^N} \frac{J_p(u(x)-u(y))\,\left(\dfrac{|\eta|^{p}}{(\varepsilon+u)^{p-1}}(x)-\dfrac{|\eta|^{p}}{(\varepsilon+u)^{p-1}}(y)\right)}{|x-y|^{N+s\,p}}\,dx\,dy\\
	&\le \iint_{\mathbb{R}^N\times\mathbb{R}^N} \frac{\Big||\eta(x)|-|\eta(y)|\Big|^p}{|x-y|^{N+s\,p}}\,dx\,dy\le [\eta]_{W^{s,p}(\mathbb{R}^N)}^p.
	\end{split}
	\]
	In the last inequality we used that 
	\begin{equation}
	\label{absolutevalue}
	\big[|\eta|\big]_{W^{s,p}(\mathbb{R}^N)}^p\le [\eta]_{W^{s,p}(\mathbb{R}^N)}^p,
	\end{equation}
and the inequality is strict, unless $\eta$ has constant sign almost everywhere (see the proof of \cite[Lemma 3.2]{BBZ}).	
	By taking the limit as $\varepsilon$ goes to $0$ on the left-hand side, using that $u$ is positive on $\Omega$ and the arbitrariness of $\eta\in C^\infty_0(\Omega)$, this finally gives that $\lambda\le \mathfrak{h}_{s,p}(\Omega)$, as desired.
\vskip.2cm\noindent
In order to prove point {\it (ii)}, we observe that if $u\in \widetilde{W}^{s,p}_0(\Omega)$, we can test the weak formulation of the equation with the solution itself. This yields
	\[
	[u]_{W^{s,p}(\mathbb{R}^N)}^p=\lambda\,\int_\Omega \frac{u^p}{d_\Omega^{s\,p}}\,dx.
	\]
	On the other hand, by definition of $\mathfrak{h}_{s,p}(\Omega)$, we know that 
	\[
	\mathfrak{h}_{s,p}(\Omega)\,\int_\Omega \frac{u^p}{d_\Omega^{s\,p}}\,dx\le [u]_{W^{s,p}(\mathbb{R}^N)}^p.
	\]
This shows that $\mathfrak{h}_{s,p}(\Omega)\le \lambda$. Since the reverse inequality holds from {\it (i)}, we conclude that it must result $\lambda=\mathfrak{h}_{s,p}(\Omega)$. 
\end{proof}

In the next Lemma, we will use the weighted space $\mathcal{X}^{s,p}_0(\Omega; d_{\Omega})$ studied in Section \ref{sec:3}. 

\begin{lm}
	\label{lm:1}
	Let $1<p<\infty$, $0<s<1$ and let $\Omega\subsetneq\mathbb{R}^N$ be an open set such that 
\[
\mathfrak{h}_{s,p}(\Omega)>0.
	\]
	Then for every $0\le \lambda<\mathfrak{h}_{s,p}(\Omega)$ there exists a local positive weak supersolution $u_\lambda\in \mathcal{X}^{s,p}_0(\Omega;d_\Omega)$ of the equation \eqref{equazione}.
	More precisely, the function $u_\lambda$ is a weak solution of the equation
	\begin{equation}
	\label{sbilenca}
	(-\Delta_p)^s u=\lambda\,\frac{u^{p-1}}{d_\Omega^{s\,p}}+1_{B},\qquad \mbox{ in }\Omega,
	\end{equation}
	where $B \Subset \Omega$ is a fixed ball.
\end{lm}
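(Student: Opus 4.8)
The plan is to construct $u_\lambda$ as the minimizer of a suitable energy functional on the space $\mathcal{X}^{s,p}_0(\Omega;d_\Omega)$, exploiting the compactness result of Theorem \ref{lm:X}. Concretely, I would fix a ball $B\Subset\Omega$ and consider the functional
\[
\mathcal{F}_\lambda(v):=\frac{1}{p}\,[v]^p_{W^{s,p}(\mathbb{R}^N)}-\frac{\lambda}{p}\,\int_\Omega\frac{|v|^p}{d_\Omega^{s\,p}}\,dx-\int_B v\,dx,\qquad v\in\mathcal{X}^{s,p}_0(\Omega;d_\Omega).
\]
Since $0\le\lambda<\mathfrak{h}_{s,p}(\Omega)$, Remark \ref{oss:estendi} gives $\lambda\int_\Omega|v|^p/d_\Omega^{s\,p}\,dx\le(\lambda/\mathfrak{h}_{s,p}(\Omega))[v]^p_{W^{s,p}(\mathbb{R}^N)}$ with $\lambda/\mathfrak{h}_{s,p}(\Omega)<1$, so the quadratic-type part of $\mathcal{F}_\lambda$ is bounded below by a positive multiple of $[v]^p_{W^{s,p}(\mathbb{R}^N)}$. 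The linear term $\int_B v\,dx$ is controlled using Hölder's inequality on the bounded set $B$ together with Hardy's inequality (exactly as in the proof of Theorem \ref{lm:X}, bounding $\|v\|_{L^p(B)}$ by $\|d_\Omega\|_{L^\infty(B)}^s\mathfrak{h}_{s,p}(\Omega)^{-1/p}[v]_{W^{s,p}(\mathbb{R}^N)}$). Hence $\mathcal{F}_\lambda(v)\ge c_1[v]^p_{W^{s,p}(\mathbb{R}^N)}-c_2[v]_{W^{s,p}(\mathbb{R}^N)}$, which is coercive and bounded below on $\mathcal{X}^{s,p}_0(\Omega;d_\Omega)$ (recall $[\,\cdot\,]_{W^{s,p}(\mathbb{R}^N)}$ is an equivalent norm there).

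Next I would run the Direct Method. Take a minimizing sequence $\{u_n\}$; by coercivity $[u_n]_{W^{s,p}(\mathbb{R}^N)}$ is bounded, so Theorem \ref{lm:X} provides a subsequence converging a.e.\ on $\Omega$ (and in $L^p(\Omega')$ for every $\Omega'\Subset\Omega$) to some $u\in\mathcal{X}^{s,p}_0(\Omega;d_\Omega)$. Lower semicontinuity of $v\mapsto[v]^p_{W^{s,p}(\mathbb{R}^N)}$ follows from Fatou applied to $D^s u_n$; for the distance-weighted term, Fatou's Lemma again gives $\int_\Omega|u|^p/d_\Omega^{s\,p}\,dx\le\liminf\int_\Omega|u_n|^p/d_\Omega^{s\,p}\,dx$, but here I must be careful because this term enters with a \emph{minus} sign. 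The standard fix is to pass to the actual minimizer differently: since $\mathcal{F}_\lambda$ is convex (strictly, because $[\,\cdot\,]^p_{W^{s,p}}$ is strictly convex for $1<p<\infty$ and the remaining terms are convex once we note $-(\lambda/p)\int|v|^p/d_\Omega^{sp}$ combined with $(1/p)[v]^p$ is still convex as the difference is dominated), one uses that a convex lower semicontinuous coercive functional on a reflexive-type Banach space attains its minimum; alternatively, work with the convex functional $v\mapsto \frac1p[v]^p-\int_B v$ subject to the constraint or, cleanest, observe that $\mathcal{F}_\lambda$ restricted to any ball is weakly sequentially l.s.c.\ in the $D^s$-weak topology and use Mazur as in Theorem \ref{lm:X} to upgrade the a.e.\ limit to a strong $\mathcal{X}^{s,p}_0$ limit of convex combinations, along which $\mathcal{F}_\lambda$ is continuous. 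Either way one obtains a minimizer $u_\lambda$.

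Then I would derive the Euler--Lagrange equation: for $\varphi\in C^\infty_0(\Omega)$ the map $t\mapsto\mathcal{F}_\lambda(u_\lambda+t\varphi)$ is differentiable at $t=0$ (the distance-weighted and linear terms pose no problem; differentiability of $t\mapsto[u_\lambda+t\varphi]^p_{W^{s,p}(\mathbb{R}^N)}$ is standard and the resulting integrand lies in $L^1(\mathbb{R}^N\times\mathbb{R}^N)$ by Proposition \ref{lm:banach}, since $u_\lambda\in\mathcal{X}^{s,p}(\Omega;d_\Omega)\subset W^{s,p}_{\rm loc}(\Omega)\cap L^{p-1}_{sp}(\mathbb{R}^N)$), and vanishing of the derivative yields precisely the weak formulation of \eqref{sbilenca}. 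Finally, to get that $u_\lambda$ is \emph{positive} I would test with $\varphi$ related to the negative part $u_\lambda^-$, or simply replace $u_\lambda$ by $|u_\lambda|$: since $[\,|u_\lambda|\,]_{W^{s,p}(\mathbb{R}^N)}\le[u_\lambda]_{W^{s,p}(\mathbb{R}^N)}$ by \eqref{absolutevalue} while the distance term is unchanged and $\int_B|u_\lambda|\,dx\ge\int_B u_\lambda\,dx$, the function $|u_\lambda|$ does not increase $\mathcal{F}_\lambda$; hence it is also a minimizer, so we may assume $u_\lambda\ge0$, and strict positivity on $\Omega$ then follows from a minimum principle / the equation \eqref{sbilenca} (the right-hand side being $\ge 1_B\not\equiv0$ and nonnegative), noting $u_\lambda\not\equiv0$ because $\mathcal{F}_\lambda(u_\lambda)\le\mathcal{F}_\lambda(0)=0$ with equality only at $0$ by strict convexity, actually $\mathcal{F}_\lambda(t\,1_B\text{-ish test})<0$ for small $t$. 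Since a solution of \eqref{sbilenca} is in particular a supersolution of \eqref{equazione} (the extra term $1_B\ge0$), the lemma follows.

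\textbf{Main obstacle.} The delicate point is the lower semicontinuity / existence step: because the distance-weighted term appears with a negative sign, one cannot directly invoke Fatou to pass to the liminf in $\mathcal{F}_\lambda$ along a merely a.e.-convergent minimizing sequence. The resolution is to exploit convexity of $\mathcal{F}_\lambda$ (guaranteed by $\lambda<\mathfrak{h}_{s,p}(\Omega)$ via the equivalent-norm remark) and combine weak $L^p$ compactness of $\{D^s u_n\}$ with Mazur's Lemma — exactly the device already used in Theorem \ref{lm:X} — so that convex combinations converge \emph{strongly} in $\mathcal{X}^{s,p}_0(\Omega;d_\Omega)$, along which $\mathcal{F}_\lambda$ is continuous, forcing the limit to be a genuine minimizer. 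A secondary subtlety is justifying that the feasible test functions in the weak formulation and the Gâteaux differentiation are legitimate for $u_\lambda$ in this weighted space, which is precisely what Proposition \ref{lm:banach} is designed to supply.
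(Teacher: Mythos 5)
Your overall architecture coincides with the paper's: the same functional $\mathfrak{F}_\lambda$, the same coercivity estimate via Hardy plus H\"older/Young, compactness of the minimizing sequence via Theorem \ref{lm:X}, the replacement of $u$ by $|u|$ using \eqref{absolutevalue} to get nonnegativity, and the minimum principle for strict positivity. The gap is exactly at the point you flag as the ``main obstacle,'' and neither of your two proposed fixes works. First, $\mathfrak{F}_\lambda$ is \emph{not} convex for general $p\neq 2$: the fact that $\lambda\int_\Omega |v|^p/d_\Omega^{s\,p}\,dx$ is dominated by $[v]^p_{W^{s,p}(\mathbb{R}^N)}$ makes the difference \emph{nonnegative}, but nonnegativity of a difference of $p$-th powers of norms does not imply convexity except when $p=2$ (where one has a nonnegative quadratic form). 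A finite-dimensional model already shows this: with $N_1(x,y)=|x|+|y|$, $N_2(x,y)=|x|$ and $f=N_1^p-\lambda N_2^p$, convexity along the segment from $(1,0)$ to $(0,1)$ forces $2^{-p}\ge 2^{-1}$, which fails for every $p>1$. Without convexity, Mazur's Lemma does not let you compare $\mathfrak{F}_\lambda$ of the convex combinations with $\mathfrak{F}_\lambda(u_\ell)$, so the ``strong limit of convex combinations'' route does not force the limit to be a minimizer. Second, the alternative claim that $\mathfrak{F}_\lambda$ is weakly sequentially l.s.c.\ would require the negatively-signed term $v\mapsto\int_\Omega|v|^p/d_\Omega^{s\,p}\,dx$ to be weakly \emph{continuous}, i.e.\ compactness of the embedding $\mathcal{X}^{s,p}_0(\Omega;d_\Omega)\hookrightarrow L^p(\Omega;d_\Omega^{-s\,p})$, which is precisely what is unavailable for a general open set $\Omega$ (this is the whole difficulty the paper is addressing; cf.\ the half-space example in Remark \ref{oss:regolamento}).

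The device the paper actually uses, and which your argument is missing, is the Br\'ezis--Lieb Lemma applied along the a.e.-convergent minimizing sequence to \emph{both} terms:
\[
[u_n]^p_{W^{s,p}(\mathbb{R}^N)}=[u]^p_{W^{s,p}(\mathbb{R}^N)}+[u_n-u]^p_{W^{s,p}(\mathbb{R}^N)}+o(1),\qquad
\int_\Omega \frac{|u_n|^p}{d_\Omega^{s\,p}}\,dx=\int_\Omega \frac{|u|^p}{d_\Omega^{s\,p}}\,dx+\int_\Omega \frac{|u_n-u|^p}{d_\Omega^{s\,p}}\,dx+o(1).
\]
Substituting into $\mathfrak{F}_\lambda(u_n)=m(\lambda)+o(1)$ and then applying Hardy's inequality to $u_n-u\in\mathcal{X}^{s,p}_0(\Omega;d_\Omega)$ shows that the leftover term $\frac1p[u_n-u]^p_{W^{s,p}(\mathbb{R}^N)}-\frac{\lambda}{p}\int_\Omega|u_n-u|^p/d_\Omega^{s\,p}\,dx$ is nonnegative (here $\lambda<\mathfrak{h}_{s,p}(\Omega)$ is used again), whence $\mathfrak{F}_\lambda(u)\le m(\lambda)$. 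This is the step you need to insert; the rest of your outline (Euler--Lagrange equation for \eqref{sbilenca}, $u\not\equiv 0$ since $m(\lambda)<0$, positivity, and the observation that a solution of \eqref{sbilenca} is a supersolution of \eqref{equazione}) then goes through as in the paper.
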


\begin{proof}
	We first observe that, for every $\varphi\in \mathcal{X}^{s,p}_0(\Omega, d_{\Omega})$, we have 
	\[
\begin{split}
\int_B |\varphi| \,dx&\le |B|^{\frac{p-1}{p}} \|d_\Omega\|_{L^{\infty}(B)}^{s} \, \left(\int_\Omega \frac{|\varphi|^p}{d_\Omega^{s\,p}}\,dx\right)^{\frac{1}{p}}\le |B|^{\frac{p-1}{p}} \,\|d_\Omega\|_{L^{\infty}(B)}^{s} \, \left(\frac{1}{\mathfrak{h}_{s,p}(\Omega)}\right)^{\frac{1}{p}}\,[\varphi]_{W^{s,p}(\mathbb{R}^N)},
\end{split}
\]	
thanks to H\"older's inequality, the definition of $\mathfrak{h}_{s,p}(\Omega)$ and the fact that Hardy's inequality holds in $\mathcal{X}^{s,p}_0(\Omega;d_\Omega)$, as well (see Remark \ref{oss:estendi}).
	This shows that we have the continuous embedding $\mathcal{X}^{s,p}_0(\Omega; d_{\Omega})\hookrightarrow L^1(B)$, for every $B \Subset \Omega$ as in the statement. 
	\par
	Let $0\le \lambda<\mathfrak{h}_{s,p}(\Omega)$, we consider the functional
	\[
	\mathfrak{F}_{\lambda}(\varphi)=\frac{1}{p}\,[\varphi]^p_{W^{s,p}(\mathbb{R}^N)}-\frac{\lambda}{p}\,\int_\Omega \frac{|\varphi|^p}{d_\Omega^{s\,p}}\,dx-\int_B \varphi\,dx,\qquad \mbox{ for every } \varphi\in \mathcal{X}^{s,p}_0(\Omega; d_{\Omega}).
	\]
	We will construct the desired supersolution as a minimizer of the following problem
	\[
	m(\lambda):=\inf_{\varphi\in \mathcal{X}^{s,p}_0(\Omega; d_{\Omega})} \mathfrak{F}_{\lambda}(\varphi).
	\]
	We first notice that by Hardy's inequality we have, for every $\varphi\in \mathcal{X}^{s,p}_0(\Omega; d_{\Omega})$
	\[
	\begin{split}
	\mathfrak{F}_{\lambda}(\varphi)&\ge \frac{1}{p}\,\left(1-\frac{\lambda}{\mathfrak{h}_{s,p}(\Omega)}\right)\,[\varphi]^p_{W^{s,p}(\mathbb{R}^N)}-\int_B \varphi\,dx\\
	&\ge \frac{1}{p}\,\left(1-\frac{\lambda}{\mathfrak{h}_{s,p}(\Omega)}\right)\,[\varphi]^p_{W^{s,p}(\mathbb{R}^N)}-\frac{p-1}{p}\,\varepsilon^\frac{1}{1-p}\,\int_B d_\Omega^{\frac{s\,p}{p-1}} \,dx-\frac{\varepsilon}{p}\,\int_B \frac{|\varphi|^p}{d_\Omega^{s\,p}}\,dx\\
	&\ge \frac{1}{p}\,\left(1-\frac{\lambda}{\mathfrak{h}_{s,p}(\Omega)}\right)\,[\varphi]^p_{W^{s,p}(\mathbb{R}^N)}-\frac{p-1}{p}\,\varepsilon^\frac{1}{1-p}\,\int_B d_\Omega^{\frac{s\,p}{p-1}} \,dx-\frac{\varepsilon}{p}\,\frac{1}{\mathfrak{h}_{s,p}(\Omega)}\,[\varphi]^p_{W^{s,p}(\mathbb{R}^N)},
	\end{split}
	\]
	with $\varepsilon>0$, where we also used Young's inequality. In particular, by choosing
	\[
	\varepsilon=\frac{\mathfrak{h}_{s,p}(\Omega)-\lambda}{2},
	\]
	we can infer that
	\begin{equation}
	\label{coercive}
	\mathfrak{F}_{\lambda}(\varphi)\ge c_1\,[\varphi]^p_{W^{s,p}(\mathbb{R}^N)}-\frac{1}{C_1},\qquad \mbox{ for every } \varphi\in \mathcal{X}^{s,p}_0(\Omega; d_{\Omega}),
	\end{equation}
where $c_1>0$ and $C_1>0$ do not depend on $\varphi$. This in particular shows that $m(\lambda)>-\infty$. 
	
	\par
	Let us now take a minimizing sequence $\{u_n\}_{n\in\mathbb{N}}\subset \mathcal{X}^{s,p}_0(\Omega; d_{\Omega})$ such that 
	\[
	\mathfrak{F}_{\lambda}(u_n)\le m(\lambda)+\frac{1}{n+1},\qquad \mbox{ for every } n\in\mathbb{N}.
	\]
	By appealing to \eqref{coercive}, we get in particular that there exists a constant $M>0$ such that
\[
	[u_n]^p_{W^{s,p}(\mathbb{R}^N)}\le M,\qquad \mbox{ for every } n\in\mathbb{N}.
\]
By applying Theorem \ref{lm:X}, we can infer existence of $u \in \mathcal{X}^{s,p}_0(\Omega; d_{\Omega})$ such that the sequence converges almost everywhere in $\mathbb{R}^N$ and such that 
\[
	\int_B u_n\,dx=\int_B u\,dx+o(1),\qquad \mbox{ as }n\to \infty,
	\]
 up to a subsequence. Observe that by construction we have 
	\[
	m(\lambda)+\frac{1}{n+1}\ge \frac{1}{p}\,[u_n]^p_{W^{s,p}(\mathbb{R}^N)}-\frac{\lambda}{p}\,\int_\Omega \frac{|u_n|^p}{d_\Omega^{s\,p}}\,dx-\int_B u_n\,dx\ge m(\lambda),
	\]
	which in particular implies that
	\begin{equation}
	\label{minimizzante}
	\frac{1}{p}\,[u_n]^p_{W^{s,p}(\mathbb{R}^N)}-\frac{\lambda}{p}\,\int_\Omega \frac{|u_n|^p}{d_\Omega^{s\,p}}\,dx-\int_B u_n\,dx=m(\lambda)+o(1),\qquad \mbox{ as }n\to \infty.
	\end{equation}
By applying the Br\'ezis-Lieb Lemma (see \cite[Theorem 1]{BL} and also \cite[Lemma 2.2]{BSY}), we get
	\[
	\frac{\lambda}{p}\,\int_\Omega \frac{|u_n|^p}{d_\Omega^{s\,p}}\,dx=\frac{\lambda}{p}\,\int_\Omega \frac{|u|^p}{d_\Omega^{s\,p}}\,dx+\frac{\lambda}{p}\,\int_\Omega \frac{|u_n-u|^p}{d_\Omega^{s\,p}}\,dx+o(1),\qquad \mbox{ as }n\to \infty,
	\]
	and
	\[
	[u_n]^p_{W^{s,p}(\mathbb{R}^N)}=[u]^p_{W^{s,p}(\mathbb{R}^N)}+[u_n-u]^p_{W^{s,p}(\mathbb{R}^N)}+o(1),\qquad \mbox{ as } n\to\infty.
	\]
	By inserting these informations in \eqref{minimizzante}, we obtain
	\[
	\mathfrak{F}_{\lambda}(u)+\frac{1}{p}\,[u_n-u]^p_{W^{s,p}(\mathbb{R}^N)}-\frac{\lambda}{p}\,\int_\Omega \frac{|u_n-u|^p}{d_\Omega^{s\,p}}\,dx=m(\lambda)+o(1),\qquad \mbox{ as }n\to \infty.
	\]
	We can now use Hardy's inequality for the function $u_n-u\in \mathcal{X}^{s,p}_0(\Omega;d_\Omega)$. Thanks to the choice of $\lambda$, it holds that
	\[
	\mathfrak{F}_{\lambda}(u)\le m(\lambda)+o(1),\qquad \mbox{ as }n\to \infty,
	\]
	and by taking the limit as $n$ goes to $\infty$, we finally get that $u$ is a minimizer. 
	\par
	By minimality, we get that $u$ must be non-negative. Indeed, by using \eqref{absolutevalue} and observing that 
	\[
	-\int_B u\,dx\ge -\int_B |u|\,dx,
	\] 
	we have 
	\[
	\mathfrak{F}_{\lambda}(u)\ge \mathfrak{F}_{\lambda}(|u|),
	\]
	and the inequality is strict, unless $u\ge 0$ almost everywhere in $\Omega$. Moreover, by minimality $u$ is a weak solution of the Euler-Lagrange equation \eqref{sbilenca},
	as claimed. This in particular proves that $u\not \equiv 0$. Observe that (see Proposition \ref{lm:banach})
	\[
	\mathcal{X}^{s,p}_0(\Omega)\subset W^{s,p}_{\rm loc}(\Omega)\cap L^{p-1}_{s\,p}(\mathbb{R}^N),
	\]
thus $u$ is a local weak supersolution, in the sense of Definition \ref{defi:subsuper}.	
	Finally, by using the minimum principle, we get that $u$ is positive on $\Omega$ (by proceeding as in the proof of \cite[Lemma 3.2]{BBZ}, for example).
\end{proof}

By joining the previous two technical results, we finally get the characterization of the sharp fractional $(s,p)-$Hardy constant stated in Theorem \ref{teo:duale}.
\begin{proof}[Proof of Theorem \ref{teo:duale}]
We first observe that the set of admissible $\lambda$ is non-empty: indeed, it always contains $\lambda=0$. To see this, it is sufficient to observe that any positive constant function is a local weak solution of 
\[
(-\Delta_p)^s u=0,\qquad \mbox{ in }\Omega,
\]
which is \eqref{equazione} for $\lambda=0$.
\par
In order to prove the claimed identity, we first consider the case $\mathfrak{h}_{s,p}(\Omega)=0$. Then, the previous discussion and Lemma \ref{lm:2} imply that the set of admissible $\lambda$ is actually given by the singleton $\{0\}$. 
Thus the conclusion holds.
\par
In the case $\mathfrak{h}_{s,p}(\Omega)>0$,
again by Lemma \ref{lm:2}, we have that $\mathfrak{h}_{s,p}(\Omega)\ge \lambda$ for every $\lambda$ such that \eqref{equazione} admits a positive local weak supersolution. On the other hand, from Lemma \ref{lm:1} we have that for every $\varepsilon>0$ small enough there exists 
	\[
	\mathfrak{h}_{s,p}(\Omega)-\varepsilon<\lambda<\mathfrak{h}_{s,p}(\Omega),
	\]
	such that \eqref{equazione} admits a positive local weak supersolution. This concludes the proof.
\end{proof}

\end{document}